\newcommand{\N}{\mathbb{N}}
\newcommand{\Z}{\mathbb{Z}}
\newcommand{\R}{\mathbb{R}}
\newcommand{\C}{\mathbb{C}}
\newcommand{\E}{\mathbb{E}}
\newcommand{\spr}[2]{\langle #1, #2 \rangle}
\newcommand{\Eai}{E^\alpha_\infty}
\newcommand{\Eau}{E^\alpha_{\text{unif}}}
\newcommand{\equi}{\phi}
\newcommand{\dyad}{\varphi}
\newcommand{\Fdyad}{\psi}
\newcommand{\ta}{\langle t \rangle}
\newcommand{\na}{\langle n \rangle}
\newcommand{\M}{\mathcal{M}}
\newcommand{\Ma}{\M^\alpha}
\newcommand{\Bes}{\mathcal{B}}
\DeclareMathOperator{\supp}{supp}
\DeclareMathOperator{\Id}{id}
\DeclareMathOperator{\Gauss}{Gauss}
\let\Re=\relax \DeclareMathOperator{\Re}{Re}
\let\Im=\relax \DeclareMathOperator{\Im}{Im}
\newcommand{\bignorm}[1]{\bigl\Vert#1\bigr\Vert}
\newcommand{\Bignorm}[1]{\Bigl\Vert#1\Bigr\Vert}
\newtheorem{thmalt}{Theorem}[section]
\theoremstyle{definition}
\newtheorem{rem}[thmalt]{Remark}
\newtheorem{defi}[thmalt]{Definition}
\newtheorem{thm}[thmalt]{Theorem}
\newtheorem{cor}[thmalt]{Corollary}
\newtheorem{lem}[thmalt]{Lemma}
\newtheorem{prop}[thmalt]{Proposition}
\numberwithin{equation}{section}
\title[Spectral multipliers for wave operators]
 {Spectral multipliers for wave operators} 
\author{Ch. Kriegler}
\address{Ch. Kriegler\\
Laboratoire de Math\'ematiques (CNRS UMR 6620)\\
Universit\'e Blaise-Pascal (Clermont-Ferrand 2)\\
Campus des C\'ezeaux\\
63177 Aubi\`ere Cedex\\
France
}
\email{christoph.kriegler@math.univ-bpclermont.fr}
\date{\today}
\subjclass[2010]{47A60, 47D60}
\keywords{Functional calculus, Mihlin spectral multipliers, Wave operator}
\begin{document}

\begin{abstract}
A classical theorem of Mihlin yields $L^p$ estimates for spectral multipliers $L^p(\mathbb{R}^d) \to L^p(\mathbb{R}^d),\: g \mapsto \mathcal{F}^{-1}[ f( | \cdot |^2) \cdot \hat{g} ],$
in terms of $L^\infty$ bounds of the multiplier function $f$ and its weighted derivatives up to an order $\alpha > \frac{d}{2}.$
This theorem, which is a functional calculus for the standard Laplace operator, has generalisations in several contexts such as elliptic operators on domains and manifolds,
Schr\"odinger operators and sublaplacians on Lie groups.
However, for the wave equation functions $f_\alpha(\lambda) = (1 + \lambda)^{-\alpha} e^{i t \lambda},$ a better estimate is available, in the standard case (works of Miyachi and Peral)
and on Heisenberg Lie groups (M\"uller and Stein).
By a transference method for polynomially bounded regularized groups, we obtain a new class of spectral multipliers for operators that have these better wave spectral multipliers and that admit a
spectral decomposition of Paley-Littlewood type.
\end{abstract}

\maketitle

\section{Introduction}\label{Sec 1 Intro}

This article treats spectral multiplier problems.
A classical example is Mihlin's theorem \cite{Mikh} telling that for a function $f : (0,\infty) \to \C$ the corresponding Fourier multiplier $L^p(\R^d) \to L^p(\R^d),\: g \mapsto \mathcal{F}^{-1} [ \hat{g} f(|\cdot|^2 ) ]$ is bounded for any $1 < p < \infty$ provided that
\begin{equation}\label{Equ Hor Intro}
\sup_{t > 0} t^k |f^{(k)}(t)| < \infty \quad (k = 0,1,\ldots,\alpha)
\end{equation}

where $\alpha > d/2.$
There are many generalisations of this result in the literature (see \cite{DuOS} and
the many references therein) associating to a function $f$ a spectral multiplier $f(A)$ acting on some Banach space $X,$ mostly $X = L^p(\Omega)$ for some $1 < p < \infty.$
In the classical case this becomes $A = - \Delta,$ $X = L^p(\R^d).$
Also the above condition \eqref{Equ Hor Intro} is refined to a norm $\|f\|_{\Ma}$ with 
a real parameter $\alpha > 0$ and associated Banach algebra $\Ma$
(definition in Section \ref{Sec 2 Prelims}).
A Banach space valued treatise of this issue can be found in \cite{Kr,KrW}.

In this article a refinement of the spectral multiplier problem is regarded.
The motivation is that for some cases, a certain wave spectral multiplier admits an estimate which is better than what gives Mihlin's result.
Namely, let $f_\alpha(\lambda) = (1 + \lambda)^{-\alpha} e^{it \lambda}.$
We write in short $\ta = 1 + |t|$ and $a \lesssim b$ for $\exists\:c:\: a \leq c b.$
Then $f_\alpha$ satisfies for any $\epsilon \in (0,\alpha),\:\|f_\alpha\|_{\M^{\alpha - \epsilon}} \lesssim \ta^{\alpha},$
which gives then estimates of the spectral multiplier $f_\alpha(A)$ on $L^p$ for $\alpha > \frac{d}{2}$ and $1 < p < \infty.$

Surprisingly, in some cases of operators $A,$ a better estimate of $f_\alpha(A)$ is available
than given by Mihlin's theorem.
Namely, in \cite{Per} for the classical case and in \cite[(3.1)]{Mu} for the case of a sublaplacian operator
on a Heisenberg group, it is proved that for the square root $A$ of $-\Delta$ resp. of the sublaplacian,
\begin{equation}\label{Equ Intro 1}
\|f_\alpha(A) \|_{p \to p} \lesssim \ta^{\alpha}
\end{equation}
with $\alpha > \frac{d-1}{2}$ and $1 < p < \infty,$ so the critical value of $\alpha$ is smaller by $\frac12.$
This observation is the starting point of the present article.

Apart from $\Ma,$ we introduce two new functional calculus classes $\Eai$ and $\Eau$.
The second one admits an embedding from and into $\M^\beta$ depending on what are the values of $\alpha$ and $\beta,$ whereas the first one can be nicely compared to Besov spaces $\Bes^\alpha_{\infty,1},$
see \cite[Proposition 3.5]{Kr1} where it is studied in detail.
By means of a transference principle, we show that a condition \eqref{Equ Intro 1} together with a second similar bound imply that $A$ which acts on some Banach space $X$ 
has a smoothed $\Eai$ calculus in the sense that
\[ \| ( 1 + A )^{-\beta} f(A) \| \leq C \| f \|_{\Eai} \]
for a certain power $\beta.$\\

One of the consequences of a Mihlin type theorem is that $A$ admits a spectral decomposition of Paley-Littlewood type.
By this we mean, that if $(\dyad_n)_{n \in \Z}$ is a dyadic partition of unity (see Definition \ref{Def Partitions}), then the norm on the space $X$ where $A$ acts on admits a partition of the form
\begin{equation}\label{Equ Paley-Littlewood}
\|x\|^2 \cong \E \| \sum_{n \in \Z} \gamma_n \otimes \dyad_n(A) x \|^2,
\end{equation}
where $\gamma_n$ are independent Gaussian random variables on some probability space.
The expression on the right hand side of \eqref{Equ Paley-Littlewood} is also used to define the notion of $\gamma$-boundedness well-known to specialists (see Section \ref{Sec 2 Prelims}).

A further result in this article is that if $A$ satisfies a strengthened $\gamma$-bounded version of \eqref{Equ Intro 1} together with a Paley-Littlewood decomposition
\eqref{Equ Paley-Littlewood}, then $A$ has an $\Eau$ functional calculus.
Furthermore, in Theorem \ref{Thm 4}, we obtain an equivalence of the strengthened $\gamma$-bounded form of \eqref{Equ Intro 1} and a $\gamma$-bounded functional calculus.
Secondly, we deduce the $\Eau$ calculus.

This theorem applies to the standard case, which is the content of Section \ref{Sec 5 Poisson Semigroup}.
There we prove that the hypothesis of Theorem \ref{Thm 4} is satisfied for $A = (-\Delta)^{\frac12}.$
Apart from an application of Theorem \ref{Thm 4}, we deduce a $\gamma$-bounded strengthening of the very first cited result, the classical Mihlin theorem.

\section{Preliminaries}\label{Sec 2 Prelims}

In this section, we present the tools used in the subsequent sections.

\begin{defi}\label{Def Partitions}
\begin{enumerate}
\item Let $\equi \in C^\infty_c$ such that $\supp \equi \subset [-1,1].$
Put $\equi_n = \equi(\cdot - n)$ and assume that $\sum_{n \in \Z} \equi_n(t) = 1$ for any $t \in \R.$
We call $(\equi_n)_n$ an equidistant partition of unity.
\item Let $\dyad \in C^\infty_c$ such that $\supp \dyad \subset [\frac12,2]$ and with $\dyad_n = \dyad(2^{-n} \cdot)$ we have $\sum_{n \in \Z} \dyad_n(t) = 1$ for any $t > 0,$
then we call $(\dyad_n)_n$ a dyadic partition of unity.
\item Let $\Fdyad_0,\,\Fdyad_1 \in C^\infty_c(\R)$ such that $\supp \Fdyad_1 \subset [\frac12,2]$ and $\supp \Fdyad_0 \subset [-1,1].$
For $n \geq 2,$ put $\Fdyad_n = \Fdyad_1(2^{1-n}\cdot),$ so that $\supp \Fdyad_n \subset [2^{n-2},2^n].$
For $n \leq -1,$ put $\Fdyad_n = \Fdyad_{-n}(-\cdot).$
We assume that $\sum_{n \in \Z} \Fdyad_n(t) = 1$ for all $t \in \R.$
Then we call $(\Fdyad_n)_{n \in \Z}$ a dyadic Fourier partition of unity, which we will exclusively use to decompose the Fourier image of a function.
\end{enumerate}
\end{defi}

For the existence of such smooth partitions, we refer to the idea in \cite[Lemma 6.1.7]{BeL}.
Whenever $(\phi_n)_n$ is a partition of unity as above, we put
\[
\widetilde\phi_n = \sum_{k=-1}^1 \phi_{n+k}.
\]
It is useful to note that
\[
\widetilde\phi_m \phi_n = \phi_n\text{ for }m=n\text{ and }\widetilde\phi_m \phi_n = 0\text{ for }|n-m| \geq 2.
\]

The Besov spaces $\Bes^\alpha_{\infty,\infty}$ and $\Bes^\alpha_{\infty,1},$ are defined for example in \cite[p. 45]{Triea}:
Let $(\Fdyad_n)_{n\in\Z}$ be a dyadic Fourier partition of unity.
Then
\[ \Bes^\alpha_{\infty,\infty} = \{ f \in C_b^0 : \: \|f\|_{B^\alpha_{\infty,\infty}} = \sup_{n \in \Z} 2^{|n|\alpha} \|f \ast \check{\Fdyad_n}\|_\infty < \infty\} \]
and
\[
 \Bes^\alpha_{\infty,1} = \{ f \in C_b^0 : \: \|f\|_{B^\alpha_{\infty,1}} = \sum_{n \in \Z} 2^{|n|\alpha} \|f \ast \check{\Fdyad_n}\|_\infty < \infty\}.
\]
Note that $\Bes^{\alpha}_{\infty,1} \hookrightarrow \Bes^{\alpha}_{\infty,\infty} \hookrightarrow \Bes^{\alpha-\epsilon}_{\infty,1}$ \cite[2.3.2. Proposition 2]{Triea}.
We define the Mihlin class for some $\alpha > 0$ to be
\[\Ma = \{ f : \R_+ \to \C:\: f_e \in \Bes^ \alpha_{\infty,1}\},\]
equipped with the norm $\|f\|_{\Ma} = \|f_e\|_{\Bes^\alpha_{\infty,1}}.$ 
Here we write
\[f_e : J \to \C,\,z \mapsto f(e^z)\]
for a function $f : I \to \C$ such that $I \subset \C \backslash (-\infty,0]$ and $J = \{ z \in \C : \: | \Im z | < \pi,\:e^z \in I \}.$
The space $\Ma$ coincides with the space $\Lambda^\alpha_{\infty, 1}(\R_+)$ in \cite[p. 73]{CDMY}.
We point out the particular function
\[ f_\alpha(\lambda) = (1 + \lambda)^{-\alpha} e^{it\lambda}.\]
The function $f_\alpha$ belongs to $\M^{\alpha - \epsilon}$ for any $\epsilon \in (0,\alpha)$ with $\|f_\alpha\|_{\M^{\alpha - \epsilon}} \leq C \ta^\alpha$ \cite[Proposition 4.12]{Kr}.

Let $(\gamma_k)_{k \geq 1}$ be a sequence of independent standard Gaussian variables on some probability space $\Omega_0.$
Then we let $\Gauss(X) \subset L^2(\Omega_0;X)$ be the closure of ${\rm Span}\{\gamma_k\otimes x\, :\, k\geq 1,\ x\in X\}$ in
$L^2(\Omega_{0};X)$.
For any finite family $x_1,\ldots,x_n$ in $X,$ we have
$$
\Bignorm{\sum_k \gamma_k\otimes x_k}_{\Gauss(X)} \,=\,
\left(\E \Bignorm{\sum_k \gamma_k(\cdot) x_k}_X^2\right)^{\frac12} \,=\,
\Bigr(\int_{\Omega_0}\Bignorm{\sum_k \gamma_k(\lambda)\,
x_k}_{X}^{2}\,d\lambda\,\Bigr)^{\frac{1}{2}}.
$$
Now let $\tau \subset B(X).$
We say that $\tau$ is $\gamma$-bounded if there is a constant $C\geq 0$ such that for any finite families
$T_1,\ldots, T_n$ in $\tau$, and $x_1,\ldots,x_n$ in $X$, we have
$$
\Bignorm{\sum_k \gamma_k\otimes T_k x_k}_{\Gauss(X)}\,\leq\, C\,
\Bignorm{\sum_k \epsilon_k\otimes x_k}_{\Gauss(X)}.
$$
In this case, we let $\gamma(\tau)$ denote the smallest possible $C$.
If $X$ is a Hilbert space then $\gamma(\tau) = \sup_{T \in \tau} \|T\|$ and in a general Banach space, $\gamma(\tau) \geq \sup_{T \in \tau} \|T\|.$
Note that Kahane's contraction principle states that $\tau = \{ c \Id_X :\: |c| \leq 1 \} \subset B(X)$ is $\gamma$-bounded for any Banach space $X.$
Recall that by definition, $X$ has Pisier's property $(\alpha)$ if for any finite family $x_{k,l}$ in $X,$ $(k,l) \in F,$ where $F \subset \Z \times \Z$ is a finite array,
we have a uniform equivalence
\[ \bignorm{ \sum_{(k,l) \in F} \gamma_k \otimes \gamma_l \otimes x_{k,l} }_{\Gauss(\Gauss(X))} \cong \bignorm{ \sum_{(k,l) \in F} \gamma_{k,l} \otimes x_{k,l} }_{\Gauss(X)}. \]
Examples of spaces with property $(\alpha)$ are subspaces of an $L^p$ space with $p < \infty.$ 

Let $H$ be a separable Hilbert space.
We consider the tensor product $H \otimes X$ as a subspace of $B(H,X)$ in the usual way, i.e. by identifying
$\sum_{k=1}^n h_k \otimes x_k \in H \otimes X$ with the mapping $u : h \mapsto \sum_{k=1}^n \spr{h}{h_k} x_k$ for any finite
families $h_1,\ldots,h_n \in H$ and $x_1,\ldots,x_n \in X.$
Choose such families with corresponding $u$, where the $h_k$ shall be orthonormal.
Let $\gamma_1,\ldots,\gamma_n$ be independent standard Gaussian random variables over some probability space.
We equip $H \otimes X$ with the norm
\[ \| u \|_{\gamma(H,X)} = \bignorm{ \sum_k \gamma_k \otimes x_k }_{\Gauss(X)}.\]
By \cite[Corollary 12.17]{DiJT}, this expression is independent of the choice of the $h_k$ representing $u.$
We let $\gamma(H,X)$ be the completion of $H \otimes X$ in $B(H,X)$ with respect to that norm.
Then for $u \in \gamma(H,X),$ $\|u\|_{\gamma(H,X)} = \bignorm{\sum_k \gamma_k \otimes u(e_k)}_{\Gauss(X)},$ where the $e_k$ form an orthonormal basis of $H$ \cite[Definition 3.7]{vN}.

Assume that $(\Omega,\mu)$ is a $\sigma$-finite measure space and $H = L^2(\Omega).$
Denote $P_2(\Omega,X)$ the space of Bochner-measurable functions $f: \Omega \to X$ such that $x' \circ f \in L^2(\Omega)$ for all $x' \in X'.$
We identify $P_2(\Omega,X)$ with a subspace of $B(L^2(\Omega),X'')$ by assigning to $f$ the operator $u_f$ defined by
\[
\spr{u_f h}{x'} = \int_\Omega \spr{f(t)}{x'} h(t) d\mu(t).
\]
An application of the uniform boundedness principle shows that, in fact, $u_f$ belongs to $B(L^2(\Omega),X)$ \cite[Section 4]{KaW2}, \cite[Section 5.5]{Frohl}.
Then we let
\[\gamma(\Omega,X) = \left\{ f \in P_2(\Omega,X):\: u_f \in \gamma(L^2(\Omega),X)\right\}\]
and set
\[\|f\|_{\gamma(\Omega,X)} = \|u_f\|_{\gamma(L^2(\Omega),X)}.\]
The space $\{u_f:\:f \in \gamma(\Omega,X)\}$ is a proper subspace of $\gamma(L^2(\Omega),X)$ in general.
It is dense in $\gamma(L^2(\Omega),X)$ as it contains $L^2(\Omega) \otimes X.$
An element in $\gamma(\Omega,X)$ is called square function.
For more reading on this subject we refer to \cite{vN} and for similar objects to \cite{AKMP}.
For a proof of the following lemma, we refer to \cite{vN}.

\begin{lem}\label{Lem gamma lifting}
\begin{enumerate}
\item If $K \in B(H_1,H_2)$ where $H_1$ and $H_2$ are Hilbert spaces and $u \in \gamma(H_2,X)$ then we have $u \circ K \in \gamma(H_1,X)$ and $\|u \circ K\|_{\gamma(H_1,X)} \leq \|u\|_{\gamma(H_2,X)} \|K\|.$
\item For $f \in \gamma(\R,X)$ and $g \in \gamma(\R,X'),$ we have
\[ \int_{\R} |\langle f(t),g(t) \rangle| dt \leq \|f\|_{\gamma(\R,X)} \|g\|_{\gamma(\R,X')}. \]
\end{enumerate}
\end{lem}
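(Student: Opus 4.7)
The plan is to establish (1) as the right ideal property of $\gamma$-spaces, then deduce (2) by combining (1) with a measurable unimodular sign trick and Cauchy--Schwarz on the trace pairing.

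For (1), since $H_2 \otimes X$ is dense in $\gamma(H_2,X)$, I would first reduce to the case $u = \sum_{k=1}^n f_k \otimes x_k$ with $(f_k)$ orthonormal in $H_2$. Fixing an orthonormal basis $(e_j)$ of $H_1$, one computes $(u\circ K)(e_j) = \sum_k L_{kj} x_k$ with $L_{kj} = \langle Ke_j, f_k\rangle$, so that
\[ \sum_j \gamma_j \otimes (u\circ K)(e_j) \;=\; \sum_k \tilde\gamma_k \otimes x_k, \qquad \tilde\gamma_k \;=\; \sum_j L_{kj}\gamma_j. \]
The centered Gaussian family $(\tilde\gamma_k)$ has covariance $LL^*$ of operator norm $\|K\|^2$. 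The conclusion then reduces to the \emph{Gaussian contraction principle}: for any matrix $A$ with $\|A\|\leq 1$ and any finite family $y_k \in X$, rotation invariance of the Gaussian measure together with a Naimark-type dilation of $A$ to an isometry yield $\E\|\sum_k (A\gamma)_k y_k\|^2 \leq \E\|\sum_k \gamma_k y_k\|^2$. Applying this with $A = \|K\|^{-1}L$ produces $\|u\circ K\|_\gamma \leq \|K\|\,\|u\|_\gamma$, and the general case follows by density.

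For (2), the idea is to reduce the absolute value integral to a trace pairing via a measurable unimodular function $h:\R\to\C$ satisfying $h(t)\langle f(t), g(t)\rangle = |\langle f(t),g(t)\rangle|$; such $h$ exists because the weak measurability built into the definition of $P_2(\R,X)$ renders $t \mapsto \langle f(t),g(t)\rangle$ measurable. Since $M_h$ is a contraction on $L^2(\R)$, part (1) gives $u_{hg} = u_g \circ M_h \in \gamma(L^2,X')$ with $\|hg\|_{\gamma(\R,X')} \leq \|g\|_{\gamma(\R,X')}$. Picking an orthonormal basis $(e_k)$ of $L^2(\R)$ and performing a Parseval/Fubini computation (formally $\sum_k e_k(s)e_k(t) = \delta(s-t)$; rigorously, by approximating $u_f,u_{hg}$ by finite-rank tensors),
\[ \int_\R |\langle f(t),g(t)\rangle|\,dt \;=\; \int_\R h(t)\langle f(t),g(t)\rangle\,dt \;=\; \sum_k \langle u_f e_k,\, u_{hg} e_k\rangle_{X,X'}. \]
The right-hand side equals $\E\,\langle \sum_k \gamma_k\, u_f e_k,\, \sum_k \gamma_k\, u_{hg} e_k\rangle$ by independence of the $\gamma_k$, and Cauchy--Schwarz in $L^2(\Omega_0;X)\times L^2(\Omega_0;X')$ then bounds this by $\|u_f\|_\gamma\,\|u_{hg}\|_\gamma \leq \|f\|_{\gamma(\R,X)}\|g\|_{\gamma(\R,X')}$.

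The main obstacle I anticipate is the technical justification of the Parseval/Fubini exchange in (2): the series $\sum_k \langle u_f e_k,u_{hg}e_k\rangle$ must be interpreted as an absolutely convergent trace, and one must verify that it coincides with $\int h\langle f,g\rangle\,dt$ for genuinely $\gamma$-radonifying (not merely $L^2$) data. This is handled cleanly by approximating both $u_f$ and $u_{hg}$ by finite sums in $L^2(\R)\otimes X$ respectively $L^2(\R)\otimes X'$, for which the identity is trivial, and then passing to the limit using the continuity bound from (1) and the density of such tensors noted in the excerpt. The Gaussian contraction principle underlying (1) is standard but is the one substantive ingredient whose finite-dimensional proof must be imported from outside the excerpt.
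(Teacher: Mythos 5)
The paper does not actually prove Lemma~\ref{Lem gamma lifting}; it delegates to van Neerven's survey \cite{vN}, which is also where the two arguments you reconstruct are found. Your treatment of (1) — reduce to finite tensors, observe that $\sum_j\gamma_j\otimes(u\circ K)e_j=\sum_k\tilde\gamma_k\otimes x_k$ with $(\tilde\gamma_k)$ a Gaussian vector of covariance $LL^*$ (note $\|LL^*\|\le\|K\|^2$, with equality only if the compression has full norm), and conclude by the Gaussian contraction/comparison principle via a unitary dilation and Jensen — is exactly the ideal-property proof in \cite{vN}; the Gaussian contraction principle is the one external input, as you say. Your treatment of (2) — factor out a measurable unimodular $h$ so that $hg\in\gamma(\R,X')$ by (1), identify $\int h\langle f,g\rangle$ with the trace pairing $\sum_k\langle u_fe_k,u_{hg}e_k\rangle=\E\langle\sum_k\gamma_k u_fe_k,\sum_k\gamma_k u_{hg}e_k\rangle$, then apply Cauchy--Schwarz in $L^2(\Omega_0;X)\times L^2(\Omega_0;X')$ — is again the standard argument. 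The one genuine technical point, which you correctly identify, is the passage from the trivially true trace identity on $L^2(\R)\otimes X$ to general $f\in\gamma(\R,X)$ and $hg\in\gamma(\R,X')$; this is handled in \cite{vN} precisely by the density/approximation you propose, so no gap remains once that is written out.
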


A closed operator $A : D(A) \subset X \to X$ is called $\omega$-sectorial, if the spectrum $\sigma(A)$ is contained in $\overline{\Sigma_\omega},$ $R(A)$ is dense in $X$ and
\begin{equation}\label{Equ Def Sectorial}
\text{for all }\theta > \omega\text{ there is a }C_\theta > 0\text{ such that }\|\lambda (\lambda - A)^{-1}\| \leq C_\theta \text{ for all }\lambda \in \overline{\Sigma_\theta}^c.
\end{equation}
Note that $\overline{R(A)} = X$ along with \eqref{Equ Def Sectorial} implies that $A$ is injective.
We are particularly interested in operators that are $\omega$-sectorial for any $\omega > 0$ and call them $0$-sectorial operator.
For such operators there is a theory of holomorphic functional calculus \cite{CDMY}.
Building upon this, the $0$-sectorial operator $A$ is said to have a Mihlin calculus, or more precisely a $\Ma$ calculus if there exists $C > 0$ such that
$\|f(A)\| \leq C \|f\|_{\Ma}$ for any $f \in \Ma$ \cite[Definition 4.17]{Kr}.

Any $0$-sectorial operator always generates a $C_0$-semigroup $\exp(-tA)$ which is analytic on the whole right half plane.
We have the following link between $\gamma$ bounds of this semigroup and of $f_\alpha(2^k A)$, with the function $f_\alpha$ as above.
Consider
\begin{equation}\label{Equ Semigroup Bound}
\gamma\left(\left\{ \exp(-e^{i\theta}2^k t A) : \: k \in \Z \right\} \right) \lesssim (\frac{\pi}{2} - |\theta|)^{-\alpha}
\end{equation}
and
\begin{equation}\label{Equ Wave Bound}
\gamma\left( \left\{ (1 + 2^k A)^{-\alpha} e^{it2^k A} :\: k \in \Z \right\} \right) \lesssim  \ta^\alpha.
\end{equation}
Then \eqref{Equ Semigroup Bound} $\Longrightarrow$ \eqref{Equ Wave Bound} \cite[Lemma 4.72]{Kr}.

\section{Smoothed $\Eai$ calculus}

\begin{defi}
Let $(\equi_n)_{n \in \Z}$  be an equidistant partition of unity.
We define for an $\alpha > 0$
\[ \Eai = \left\{ f : \R \to \C :\: \|f\|_{\Eai} = \sum_{n \in \Z} \na^\alpha \|f \ast \check\equi_n\|_\infty < \infty \right\}.\]
\end{defi}

Properties of this space are investigated in detail in \cite{Kr1}.

\begin{defi}
Let $(\equi_n)_{n \in \Z}$ be an equidistant partition of unity and $(\dyad_k)_{k \in \Z}$ a dyadic partition of unity.
Then we define for an $\alpha > 0$ 
\[ \Eau = \left\{ f : (0,\infty) \to \C :\: \|f\|_{\Eau} = \sum_{n \in \Z} \na^\alpha \sup_{k \in \Z} \| [f(2^k \cdot)\varphi_0] \ast \check\phi_n \|_\infty < \infty \right\}. \]
\end{defi}

The space $\Eau$ satisfies the following elementary properties.

\begin{lem}
\begin{enumerate}
\item The definition of $\Eau$ is independent of the choice of the dyadic partition $(\dyad_k)_k.$
\item $\Eau$ is an algebra, more precisely, if $f,g \in \Eau,$ then $\|f\cdot g\|_{\Eau} \leq C \|f\|_{\Eau} \|g\|_{\Eau}.$
\end{enumerate}
\end{lem}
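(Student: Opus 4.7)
The plan is to prove both parts via paraproduct analysis on the equidistant Fourier side. The engine is the observation that if $u$ and $v$ are Fourier-localized to $[m-1,m+1]$ and $[m'-1,m'+1]$, then $uv$ is Fourier-localized to $[m+m'-2,m+m'+2]$; combining this with Young's inequality and the subadditivity $\langle m+m'\rangle^\alpha \leq 2^\alpha(\langle m\rangle^\alpha + \langle m'\rangle^\alpha)$ drives both arguments. A second ingredient, needed for part (1), is a dilation estimate: $g \mapsto g(c\,\cdot)$ is bounded on $\sum_n \na^\alpha \|\cdot \ast \check\equi_n\|_\infty$ for $c$ in any fixed bounded range. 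This follows from the Fourier identity $[g(c\cdot)]\ast\check\equi_n = \sum_m[g\ast\check\equi_m](c\cdot)\ast\check\equi_n$, in which the Fourier supports force an effective restriction to $m \approx n/c$.

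For part (2), choose $\tilde\chi \in C_c^\infty((0,\infty))$ with $\tilde\chi \equiv 1$ on $\supp\dyad_0$, so that $fg(2^k\cdot)\dyad_0 = F_k G_k$ with $F_k = f(2^k\cdot)\dyad_0$ and $G_k = g(2^k\cdot)\tilde\chi$. Decomposing $F_k = \sum_m F_k \ast \check\equi_m$ and analogously $G_k$, only pairs with $|m+m'-n|\leq 2$ contribute to $(F_kG_k)\ast\check\equi_n$, and Young's inequality yields
\[ \sup_k \|(F_kG_k)\ast\check\equi_n\|_\infty \lesssim \sum_{|m+m'-n|\leq 2} a_m b_{m'}, \]
with $a_m = \sup_k \|F_k \ast \check\equi_m\|_\infty$ and $b_{m'} = \sup_k \|G_k \ast \check\equi_{m'}\|_\infty$. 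Multiplying by $\na^\alpha$, summing in $n$, and using subadditivity together with $\sum_m a_m \leq \sum_m \langle m\rangle^\alpha a_m = \|f\|_{\Eau}$ gives $\|fg\|_{\Eau} \lesssim \|f\|_{\Eau}\|g\|_{\Eau}$, provided $\sum_{m'}\langle m'\rangle^\alpha b_{m'} \lesssim \|g\|_{\Eau}$. This last estimate says the $\Eau$-norm is insensitive to replacing $\dyad_0$ by the bump $\tilde\chi$, which is part (1) in slightly extended form.

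For part (1), given two dyadic bumps $\dyad_0,\tilde\dyad_0,$ compactness yields a finite-range decomposition $\tilde\dyad_0 = \sum_{|j|\leq J_0} \tilde\dyad_0 \dyad_j$. For each $j,$ the substitution $x = 2^j y$ identifies
\[ f(2^k x)\dyad_j(x)\tilde\dyad_0(x) = [f(2^{k+j}\cdot)\dyad_0\cdot\chi_j](2^{-j}x), \quad \chi_j(y) = \tilde\dyad_0(2^j y), \]
where $\chi_j$ is a fixed smooth compactly supported bump (bounded uniformly for $|j|\leq J_0$). Applying the multiplication estimate from part (2) with $\chi_j$ as the fixed factor to strip it off, then the dilation estimate to undo $2^{-j},$ and finally reindexing $k \mapsto k-j$, yields $\|f\|_{\Eau,\tilde\dyad} \lesssim \|f\|_{\Eau,\dyad}$, with the reverse by symmetry. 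The main technical obstacle is the dilation estimate itself: one must track how the weights $\na^\alpha$ behave under the rescaling $n \leftrightarrow cm$, which is controlled by the uniform bound $\langle cm\rangle^\alpha/\langle m\rangle^\alpha \leq C(c)$ for $c$ in any bounded range; after that, the bookkeeping is routine.
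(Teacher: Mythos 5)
For part (2) you follow essentially the same route as the paper: split off a fattened bump ($\tilde\chi$ in your notation, $\widetilde\dyad_0 = \sum_{|i|\le 1}\dyad_i$ in the paper's), run the paraproduct on the equidistant side via Young's inequality and the support constraint $|m+m'-n|\le O(1)$, and reduce to a version of part (1) to strip the fattened bump. The only genuine difference is at part (1), which the paper dismisses with ``easy to check and left to the reader'': you supply an explicit argument combining (a) a dilation lemma for $g\mapsto g(c\cdot)$ on the sequence norm $\sum_n \na^\alpha\|\cdot\ast\check\equi_n\|_\infty$, obtained from Fourier-support bookkeeping ($m\approx n/c$), and (b) another paraproduct step to strip a fixed compactly supported bump, after decomposing the second dyadic bump as $\sum_{|j|\le J_0}\tilde\dyad_0\dyad_j$ and rescaling.

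This is a valid way to discharge part (1), and within the intended scope the argument is sound. Two small remarks for tightening. First, your cross-referencing reads circularly on the surface: part (2) appeals to ``part (1) in slightly extended form,'' while part (1) appeals to ``the multiplication estimate from part (2).'' What you actually use in part (1) is the bare paraproduct inequality against a \emph{fixed} smooth bump $\chi_j$ (whose $E^\alpha_\infty$-type norm you compute directly from smoothness and compact support), not the conclusion of part (2); stating this separately as a standalone lemma would remove any appearance of circularity. Second, ``$c$ in any fixed bounded range'' should be ``$c$ in a compact subset of $(0,\infty)$'': for $c\to 0$ the number of $m$'s hitting a given $n$ grows like $1/c$, and the constant in the dilation estimate degenerates. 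In your application $c=2^{-j}$ with $|j|\le J_0$, so this is harmless, but the statement as written is slightly too strong.
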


\begin{proof}
The first part of the lemma is easy to check and left to the reader.
Now let $f,g \in \Eau.$
We write in the following in short $\sum_{l,j}^*$ for $\sum_{l,j:\: |n-(l+j)| \leq 3}.$
\begin{align*}
\| \left[f(2^k \cdot) g(2^k \cdot) \dyad_0\right] \ast\check\equi_n \|_\infty & = \| \left[ f(2^k \cdot) \tilde\dyad_0 g(2^k \cdot) \dyad_0 \right] \ast  \check\equi_n\|_\infty \\
& \lesssim \sum_{l,j}^* \| f(2^k \cdot) \tilde\dyad_0 \ast\check\equi_l \|_\infty \| g(2^k\cdot) \dyad_0 \ast \check\equi_j \|_\infty.
\end{align*}
Thus, calling $f_{k,l} = \langle l \rangle^\alpha  \|f(2^k \cdot) \tilde\dyad_0 \ast \equi_l\|_\infty$ and $g_{k,j} = \langle j \rangle^{\alpha} \|g(2^k \cdot)\dyad_0 \ast\check\equi_j\|_\infty,$ we have
\begin{align*}
\sum_{n \in \Z} \na^\alpha \sup_{k \in \Z} \| \left[ f(2^k \cdot) g(2^k\cdot) \dyad_0 \right] \ast\check\equi_n \|_\infty
& \lesssim \sum_{n \in \Z} \na^\alpha \sup_{k \in \Z} \sum_{l,j}^* \langle l \rangle^\alpha  \|f(2^k \cdot) \tilde\dyad_0 \ast \check\equi_l\|_\infty \\ 
& \langle j \rangle^{\alpha} \|g(2^k \cdot)\dyad_0 \ast\check\equi_j\|_\infty
\langle l \rangle^{-\alpha} \langle j \rangle^{-\alpha} \\
& \lesssim \sum_{n \in \Z} \langle n \rangle^\alpha \sum_{l,j}^* \langle l \rangle^{-\alpha} \langle j \rangle^{-\alpha} \sup_{k \in \Z} f_{k,l} g_{k,j} \\
& \lesssim \sum_{l \in \Z} \sup_k f_{k,l} \sum_{j \in \Z} \sup_{k} g_{k,j} \\
& \cong \|f\|_{\Eau} \|g\|_{\Eau},
\end{align*}
using the first part of the lemma in the end.
\end{proof}

We use the space $\Eau$ as a functional calculus space, as is also the case for $\Ma.$
We have the following embeddings between the two.

\begin{prop}
For any $\epsilon > 0,$ we have
$\M^{\alpha + 1 + \epsilon} \hookrightarrow \Eau \hookrightarrow \M^{\alpha - \epsilon}.$
\end{prop}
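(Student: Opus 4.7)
The plan is to translate between $\Eau$, measured in the natural coordinate $t \in (0,\infty)$ on each dyadic scale, and $\Ma$, measured in the logarithmic coordinate $z = \log t$. Setting $g_k(t) = f(2^k t)\dyad_0(t)$, supported in $[\frac12,2]$, the key identity $g_{k,e}(z) = g_k(e^z) = f_e(z + k\log 2)\cdot \dyad_0(e^z)$ exhibits $g_{k,e}$ as a translate of $f_e$ multiplied by a fixed $C^\infty_c$ cutoff. I will use two standard facts throughout: (a) since translation is isometric and multiplication by a $C^\infty_c$ bump is bounded on every $\Bes^\beta_{\infty,1}$, we have $\sup_k \|g_{k,e}\|_{\Bes^\beta_{\infty,1}} \lesssim \|f\|_{\M^\beta}$; and (b) the smooth diffeomorphism $z \leftrightarrow e^z$ on the fixed compact set $[-\log 2,\log 2]$ gives $\|g_k\|_{\Bes^\beta_{\infty,1}(\R)} \cong \|g_{k,e}\|_{\Bes^\beta_{\infty,1}(\R)}$ with constants uniform in $k$. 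An elementary further input is the Fourier-support observation that $\Fdyad_m \equi_n \not\equiv 0$ only when $|m| \sim \log_2 \na$, with at most $O(1)$ values of $m$ contributing for each $n$ and $|S_m| := |\{n : \Fdyad_m\equi_n \not\equiv 0\}| \sim 2^{|m|}$.

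For $\M^{\alpha+1+\epsilon} \hookrightarrow \Eau$ these ingredients combine directly. By Young's inequality and the Fourier-support fact,
\[
\sup_k \|g_k * \check\equi_n\|_\infty \lesssim \max_{|m| \sim \log_2 \na} \sup_k \|g_k * \check\Fdyad_m\|_\infty \leq \na^{-(\alpha+1+\epsilon)} \sup_k \|g_k\|_{\Bes^{\alpha+1+\epsilon}_{\infty,1}} \lesssim \na^{-(\alpha+1+\epsilon)} \|f\|_{\M^{\alpha+1+\epsilon}},
\]
so that multiplying by $\na^\alpha$ and summing produces the convergent series $\sum_n \na^{-1-\epsilon}$. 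The extra $+1$ absorbs the one-dimensional $n$-summation and the $\epsilon$ ensures convergence.

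For $\Eau \hookrightarrow \M^{\alpha-\epsilon}$ I would run the Fourier-support step in reverse: writing $\Fdyad_m = \sum_{n \in S_m} \Fdyad_m \equi_n$ gives $\|g_k * \check\Fdyad_m\|_\infty \lesssim \sum_{n \in S_m} \|g_k * \check\equi_n\|_\infty$, and swapping orders of summation against the weights $2^{|m|\alpha}$ yields $\sup_k \|g_k\|_{\Bes^\alpha_{\infty,1}} \lesssim \|f\|_{\Eau}$, hence by (b) also $\sup_k \|g_{k,e}\|_{\Bes^\alpha_{\infty,1}} \lesssim \|f\|_{\Eau}$. The remaining task is to control $\|f_e\|_{\Bes^{\alpha-\epsilon}_{\infty,1}}$ by this uniform quantity via the identity $f_e(z) = \sum_k g_{k,e}(z - k\log 2)$. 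The main technical obstacle is that the naive triangle inequality fails: the example $f \equiv 1$ (each $g_k$ equals $\dyad_0$ and $f_e \equiv 1$) already gives $\sum_k \|g_{k,e} * \check\Fdyad_m\|_\infty = \infty$. One must instead use a bounded-overlap argument, splitting the pointwise sum into the $O(1)$ terms with $|z - k\log 2| \lesssim 1$ (which contribute $\sup_k \|g_{k,e} * \check\Fdyad_m\|_\infty$) and a Schwartz tail dominated by $|(g_{k,e} * \check\Fdyad_m)(w)| \lesssim \|g_{k,e}\|_\infty \cdot 2^{|m|(1-N)} |w|^{-N}$ for $|w| > 2\log 2$. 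Choosing $N$ large, using $\sum_k 2^{|m|}(1+2^{|m|}|z - k\log 2|)^{-N} = O(1)$ uniformly, and applying the standard embedding $\Bes^\alpha_{\infty,1} \hookrightarrow \Bes^\alpha_{\infty,\infty} \hookrightarrow \Bes^{\alpha-\epsilon/2}_{\infty,\infty}$ to absorb the residual $\sup_k \|g_{k,e}\|_\infty$ term, one arrives at $\|f_e\|_{\Bes^{\alpha-\epsilon}_{\infty,1}} \lesssim \sup_k \|g_{k,e}\|_{\Bes^\alpha_{\infty,1}} \lesssim \|f\|_{\Eau}$, and the need to pass through the weaker $\Bes^{\alpha-\epsilon/2}_{\infty,\infty}$ is precisely the source of the smoothness loss $\epsilon$ in this embedding.
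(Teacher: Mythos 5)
Your proof is correct and follows essentially the same strategy as the paper: pass to $g_k(t)=f(2^k t)\dyad_0(t)$, use that $g_{k,e}$ is a translate of $f_e$ cut off by a fixed bump (so $\sup_k\|g_{k,e}\|_{\Bes^{\beta}_{\infty,1}}\lesssim\|f\|_{\M^\beta}$ for one direction), use the smooth diffeomorphism on the fixed compact support to identify $\|g_k\|_{\Bes^\beta_{\infty,1}}\cong\|g_{k,e}\|_{\Bes^\beta_{\infty,1}}$, and do dyadic Fourier-support bookkeeping between the equidistant partition $(\equi_n)$ and the dyadic Fourier partition $(\Fdyad_m)$. For $\M^{\alpha+1+\epsilon}\hookrightarrow\Eau$ your computation tracks the paper's almost line for line, including the identification of the extra $+1$ as coming from $|A_n|\sim 2^{|n|}$.

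The place where you go a slightly different route is the direction $\Eau\hookrightarrow\M^{\alpha-\epsilon}$, and there the differences are worth noting. The paper compresses the argument into two citations: it passes $\|f\|_{\M^{\alpha-\epsilon}}\lesssim\sup_k\|g_k\|_{\M^{\alpha-\epsilon}}$ with a terse appeal to the compact support of $\dyad_0$, and then invokes \cite[Proposition~3.5(1)]{Kr1} for $\|g_k\|_{\Bes^\alpha_{\infty,1}}\lesssim\|g_k\|_{\Eai}$. You instead (i) prove the latter inequality inline by the $\Fdyad_m=\sum_{n\in S_m}\Fdyad_m\equi_n$ swap with $|S_m|\sim 2^{|m|}$, and (ii) prove the localization step explicitly by the bounded-overlap argument, splitting $f_e(\cdot)=\sum_k g_{k,e}(\cdot-k\log 2)$ into the $O(1)$ near terms plus a Schwartz tail. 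That makes your proof more self-contained, and it also makes transparent where the $\epsilon$-loss really comes from (the interchange $\sum_m\sup_k\not\leq\sup_k\sum_m$, fixed by passing through $\Bes^\alpha_{\infty,\infty}$), whereas the paper implicitly absorbs this loss into its first two terse inequalities. One cosmetic nit: your chain $\Bes^\alpha_{\infty,1}\hookrightarrow\Bes^\alpha_{\infty,\infty}\hookrightarrow\Bes^{\alpha-\epsilon/2}_{\infty,\infty}$ should end in $\Bes^{\alpha-\epsilon/2}_{\infty,1}$ if you want to sum in $m$ afterwards, and the phrase ``$|w|^{-N}$'' in the tail bound should read $|w-k\log 2|^{-N}$; neither affects the argument.
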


\begin{proof}
Start with the second embedding.
We have, using the compact support of $\dyad_0$ in the first line, and \cite[Proposition 3.5 (1)]{Kr1} in the second line,
\begin{align*}
\|f\|_{\M^{\alpha-\epsilon}} & 
\lesssim \sup_{k \in \Z} \| f(2^k \cdot) \dyad_0 \|_{\M^{\alpha-\epsilon}}
\lesssim \sup_{k \in \Z} \| f(2^k \cdot) \dyad_0 \|_{B^\alpha_{\infty,1}} \\
& \lesssim \sup_{k \in \Z} \| f(2^k \cdot) \dyad_0\|_{\Eai}
= \sup_{k \in \Z} \sum_{n \in \Z} \na^\alpha \| f(2^k \cdot) \dyad_0 \ast \check\equi_n \|_\infty  \\
& \leq \sum_{n \in \Z} \na^\alpha \sup_{k \in \Z} \|f(2^k \cdot) \dyad_0 \ast \check\equi_n\|_\infty = \|f\|_{\Eau}.
\end{align*}
For the first embedding, let for $n \in \N,$ $A_n = \{ k \in \N:\: 2^{n-1} \leq k \leq 2^n - 1\},\:A_{-n} = -A_n$ and $A_0 = \{ 0 \}.$
Thus the $A_n$ form a disjoint partition of $\Z$.
Let $(\Fdyad_n)_{n \in \Z}$ be a dyadic Fourier partition of unity.
Then 
\begin{align*}
\|f\|_{\Eau} & = \sum_{n \in \Z} \na^\alpha \sup_{k \in \Z} \| [f(2^k\cdot) \dyad_0 ]\ast \check\equi_n\|_\infty
= \sum_{n \in \Z} \sum_{l \in A_n} \langle l \rangle^\alpha \sup_{k \in \Z} \| \left[ f(2^k \cdot) \dyad_0 \right] \ast \check\equi_l \ast \tilde\Fdyad_n\check{\phantom{i}} \|_\infty \\
& \lesssim \sum_{n \in \Z} 2^{|n|\alpha} 2^{|n|} \sup_{k \in \Z}  \| [f(2^k \cdot)\dyad_0 \ast \tilde\Fdyad_n\check{\phantom{i}} \|_\infty \\
& = \sum_{n \in \Z} 2^{-|n| \epsilon} \sup_{k \in \Z} \left( 2^{|n| (\alpha + \epsilon + 1)} \| [f(2^k \cdot) \dyad_0] \ast \tilde\Fdyad_n\check{\phantom{i}} \|_\infty \right) \\
& \leq \sum_{n \in \Z} 2^{-|n| \epsilon} \sup_{k \in \Z} \sup_{m \in \Z} 2^{|m| (\alpha + 1 +\epsilon)} \| [f(2^k \cdot) \dyad_0 ] \ast \check\Fdyad_m\|_\infty \\
& \lesssim \sup_{k \in \Z} \|f(2^k\cdot)\dyad_0 \|_{B^{\alpha + 1 + \epsilon}_{\infty,\infty}} \\
& \lesssim \|f\|_{\M^{\alpha + 1 + \epsilon'}},
\end{align*}
using again the compact support of $\dyad_0$ in the last line.
\end{proof}

The following proposition of transference principle type is the main result of this section.
It can be compared to \cite[Theorem 4.9]{Kr1}.

\begin{prop}\label{Prop 1}
Let $A$ be a $0$-sectorial operator such that 
\[\|(1+A)^{-\beta_1} e^{itA} \| \leq C \ta^\alpha\]
and 
\[\{  ( 1 + A )^{-\beta_2} e^{itA} :\: t \in [0,1] \}\text{ is }\gamma\text{-bounded,}\]
for some constants $\beta_1,\,\beta_2 \geq \alpha > 0.$
Then $A$ has a smoothed $\Eai$ functional calculus in the sense that for $\beta = \beta_1 + 2 \beta_2,$
\[ \|(1 + A)^{-\beta} f(A) \| \leq C \| f \|_{\Eai} \quad (f \in \Eai,\:f\text{ has compact support in }(0,\infty)). \]
\end{prop}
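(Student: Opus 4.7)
The plan is to combine a Fourier-inversion representation of $f(A)$ with the equidistant decomposition built into $\|\cdot\|_{\Eai}$, so as to transfer the wave-type estimates of $A$ into the smoothed functional calculus bound.

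First I would write $f = \sum_{n \in \Z} f_n$ with $f_n = f \ast \check\equi_n$, so that $\hat f_n = \hat f \cdot \equi_n$ is supported in $[n-1, n+1]$. Because $f$ has compact support in $(0, \infty)$, $\hat f$ is entire and each $\hat f_n$ is smooth with compact support, so the regularised Hille--Phillips formula --- convergent since $\|(1+A)^{-\beta_1} e^{itA}\| \lesssim \ta^\alpha$ --- yields
\[
(1+A)^{-\beta} f_n(A) \;=\; \frac{1}{2\pi} \int_{n-1}^{n+1} \hat f_n(t)\,(1+A)^{-\beta} e^{itA}\, dt.
\]
Writing $t = n + s$ with $s \in [-1, 1]$ and exploiting $\beta = \beta_1 + 2\beta_2$, commutativity of the functional calculus lets me split the integrand as
\[
(1+A)^{-\beta} e^{i(n+s)A} \;=\; \bigl[(1+A)^{-\beta_1} e^{inA}\bigr]\,\bigl[(1+A)^{-\beta_2} e^{isA}\bigr]\,\bigl[(1+A)^{-\beta_2}\bigr].
\]
The first bracket has norm at most $C \na^\alpha$ by the wave hypothesis, while the middle bracket belongs to the $\gamma$-bounded family $\{T_s\}_{s \in [0,1]}$, which I would extend to $s \in [-1,1]$ by combining with the wave bound.

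The decisive step is to bound the remaining integral
\[
I_n \;=\; \int_{-1}^{1} \hat f_n(n+s)\, (1+A)^{-\beta_2} e^{isA}\, (1+A)^{-\beta_2}\, ds
\]
by $C \|f_n\|_\infty$. Setting $G_n(\lambda) = e^{-in\lambda} f_n(\lambda)$, so that $\|G_n\|_\infty = \|f_n\|_\infty$ and $\hat G_n$ is supported in $[-1,1]$, this is equivalent to showing $\|(1+A)^{-2\beta_2} G_n(A)\| \leq C \|G_n\|_\infty$. This is the main obstacle: a naive triangle-inequality estimate of the Fourier integral only yields control by $\|\hat G_n\|_{L^1}$ or $\|\hat G_n\|_{L^2}$, neither of which is majorised by $\|G_n\|_\infty$. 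I would pair the scalar weight $\hat G_n$ against the family $\{T_s\}$ via Lemma~\ref{Lem gamma lifting}(2), dualising against a functional $x' \in X'$ and writing the resulting pairing as a coupling of two functions in $\gamma([-1,1], X)$ and $\gamma([-1,1], X')$. The two smoothing factors $(1+A)^{-\beta_2}$ then play complementary roles --- one absorbing the $\gamma$-bound of $\{T_s\}$ through the lifting property in Lemma~\ref{Lem gamma lifting}(1), the other providing the dual regularisation --- and the Paley--Wiener/sampling structure of $G_n$ converts what would otherwise be an $L^2$ estimate into an $L^\infty$ bound via interpolation through the lattice values $G_n(\pi k)$, each of modulus at most $\|G_n\|_\infty$.

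Summing the resulting estimate $\|(1+A)^{-\beta} f_n(A)\| \leq C \na^\alpha \|f_n\|_\infty$ over $n \in \Z$ then yields $\|(1+A)^{-\beta} f(A)\| \leq C \|f\|_{\Eai}$, as required.
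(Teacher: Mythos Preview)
Your overall architecture --- Fourier inversion, the equidistant decomposition $f = \sum_n f_n$ with $f_n = f \ast \check\equi_n$, and the shift $t = n + s$ to isolate the factor $(1+A)^{-\beta_1} e^{inA}$ of norm $\lesssim \na^\alpha$ --- matches the paper's proof. You also correctly isolate the real difficulty: showing that the remaining piece
\[
\int_{-1}^{1} \hat G_n(s)\,(1+A)^{-2\beta_2} e^{isA}\,ds
\]
is bounded by $C\,\|G_n\|_\infty$ rather than by $\|\hat G_n\|_{L^1}$ or $\|\hat G_n\|_{L^2}$.

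However, your proposed resolution of this step is a genuine gap. Dualising via Lemma~\ref{Lem gamma lifting}(2) with $f(s) = T_s y \in \gamma([-1,1],X)$ and $g(s) = \overline{\hat G_n(s)}\,x' \in \gamma([-1,1],X')$ only yields a bound by $\|\hat G_n\|_{L^2([-1,1])}\,\|x'\|$, and $\|\hat G_n\|_{L^2}$ is \emph{not} controlled by $\|G_n\|_\infty$. The appeal to ``Paley--Wiener/sampling structure'' does not repair this: the Shannon expansion $G_n(\lambda) = \sum_k G_n(\pi k)\,\mathrm{sinc}(\lambda - \pi k)$ gives coefficients bounded by $\|G_n\|_\infty$, but the operators $(1+A)^{-2\beta_2}\,\mathrm{sinc}(A - \pi k)$ have uniformly bounded (not summable) norms, so the series estimate still diverges. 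No amount of splitting the two $(1+A)^{-\beta_2}$ factors between $X$ and $X'$ produces the needed $L^\infty$ bound from a direct pairing.

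The mechanism that actually converts $L^2$ into $L^\infty$ --- and this is the paper's key idea --- is \emph{transference through a Fourier multiplier on $\gamma(\R,X)$}. One writes the $n$-th term as a composition $P \circ M_{\hat f \equi_n} \circ I$, where
\[
I: X \to \gamma(\R,X),\quad x \mapsto 1_{[n-2,n+1]}(-t)\,(1+A)^{-\beta_1-\beta_2} e^{-itA} x,
\]
\[
P: \gamma(\R,X) \to X,\quad g \mapsto \int_0^1 (1+A)^{-\beta_2} e^{itA} g(t)\,dt,
\]
and $M_{\hat f \equi_n}$ is \emph{convolution} by $\hat f \equi_n$ on $\gamma(\R,X)$. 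The bounds $\|I\| \lesssim \na^\alpha$ and $\|P\| < \infty$ follow from the two hypotheses. The crucial point is that, since the Fourier transform is an isometry on $L^2(\R)$, Lemma~\ref{Lem gamma lifting}(1) gives
\[
\|M_{\hat f \equi_n}\|_{\gamma(\R,X) \to \gamma(\R,X)} = \|(\hat f \equi_n)\check{\phantom{i}}\|_\infty = \|f \ast \check\equi_n\|_\infty,
\]
because on the Fourier side $M_{\hat f \equi_n}$ is pointwise multiplication by $f \ast \check\equi_n$. This is what delivers the $L^\infty$ bound; your two $\beta_2$'s are then distributed one each to $I$ and $P$, not used in a duality pairing against $\hat G_n$.
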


\begin{proof}
Assume first that $f \in C^\infty_c(0,\infty).$
Then we have by a representation formula \cite[Lemma 4.77]{Kr}
\begin{align} (1 + A)^{-\beta} f(A) x & =  \frac{1}{2\pi} \int_{\R} \hat{f}(t) (1 + A)^{-\beta} e^{itA} x dt \nonumber \\ 
& = \frac{1}{2\pi} \int_\R \sum_{n \in \Z} \hat{f}(t) \equi_n(t) (1+A)^{-\beta} e^{itA} x dt.\label{Equ Proof Prop 1}
\end{align}
Write $ I : X \to \gamma(\R,X),\: x \mapsto 1_{[n-2,n+1]}(-t) (1+A)^{-\beta_1-\beta_2} e^{-itA}x$
and $P : \gamma(\R,X) \to X,\: g \mapsto \int 1_{[0,1]} (1 + A)^{-\beta_2} e^{itA}g(t) dt.$
Further, we let $M_{\hat{f} \equi_n} : \gamma(\R,X) \to \gamma(\R,X)$ be the convolution with $\hat{f} \equi_n.$
Recall that the Fourier transform is isometric on $L^2(\R),$ so by Lemma \ref{Lem gamma lifting} (1) also on $\gamma(\R,X).$
We thus have by \cite[Proof of Proposition 4.6 (2)]{Kr1} that $\|M_{\hat{f} \equi_n} \|_{\gamma(\R,X) \to \gamma(\R,X)} \cong \| f \ast \check\equi_n \|_\infty.$
One easily checks that
\[ \eqref{Equ Proof Prop 1} = \frac{1}{2\pi} \sum_{n \in \Z} P M_{\hat{f}\equi_n} I (x).\]
Note that 
\[ \| I : X \to \gamma(\R,X) \| \lesssim \gamma(\{ (1+A)^{-\beta_1 - \beta_2} e^{-itA} :\: t \in [n-2,n+1] \} ) \leq C \na^\alpha, \]
and by Lemma \ref{Lem gamma lifting} (2) also \[\|P\| \leq \gamma(\{ 1 + A)^{-\beta_2} e^{itA} :\: t \in [0,1] \} ) < \infty.\]
We conclude $\|(1 + A)^{-\beta} f(A) x \| \leq C \sum_{n \in \Z} \na^\alpha \| f \ast \check\equi_n\|_\infty \| x \| \cong \|f\|_{\Eai} \|x\|.$
The proposition follows since $C^\infty_c(0,\infty)$ is dense in $\{ f \in \Eai:\:f \text{ has compact support in }(0,\infty) \}.$
For example, the reader may check that $\| f \ast \rho_m - f \|_{\Eai} \to 0$ for any sequence $(\rho_m)_m \subset C^\infty_c(\R)$ with $\supp \rho_m \subset (-\frac1m,\frac1m),\:\int_\R \rho_m = 1,\:\rho_m \geq 0.$
\end{proof}

\begin{rem}
Note that the second hypothesis of Proposition \ref{Prop 1} is satisfied for any operator having a bounded Mihlin calculus \cite[Theorem 4.73]{Kr}.
Then the above proposition applies in two cases.
Firstly, if $A = (-\Delta)^{\frac12}$ on $X = L^p(\R^d)$ for some $1 < p < \infty,$
then the hypotheses are satisfied for any $\alpha > \frac{d-1}{2}$  \cite{Per}.
Secondly, if $A$ is the square root of a sublaplacian on the Heisenberg group, then the hypotheses are also satisfied for any $\alpha> \frac{d-1}{2}$ \cite[(3.1)]{Mu}.
Note that the critical order $\frac{d-1}{2}$ is by $\frac12$ smaller, so better, than
the critical order of $\frac{d}{2}$ in usual spectral multiplier theory.
\end{rem}

\section{$\Eau$ calculus}

Let $A$ be a $0$-sectorial operator.
Consider the conditions
\begin{equation}\label{Equ II.1}
 \gamma\left(\left\{ ( 1 + 2^k A)^{-\beta} e^{it2^k A} : \: k \in \Z \right\} \right) \leq C \ta^\alpha
\end{equation}
and
\begin{equation}\label{Equ II.2}
 \gamma\left(\left\{ ( 1 + 2^k A)^{-\gamma} e^{it2^k A} :\: k \in \Z,\:t \in [0,1] \right\} \right) < \infty.
\end{equation}

\begin{lem}\label{Lem 2}
Let $X$ have property $(\alpha)$ and $A$ be a $0$-sectorial operator satisfying \eqref{Equ II.1} and \eqref{Equ II.2}.
Let $G \subset \Eai$ such that any $f \in G$ has compact support in $(0,\infty).$
Then \[\{ (1+2^k A)^{-(\beta + 2 \gamma)} f(2^k A) :\: k \in \Z ,\: f \in G \}\text{ is }\gamma\text{-bounded} \]
provided $\sum_{n \in \Z} \na^\alpha \sup_{f \in G} \|f \ast \check\equi_n \|_\infty < \infty.$
\end{lem}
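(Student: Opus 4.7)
The statement is a $\gamma$-bounded strengthening of Proposition \ref{Prop 1}, and my plan is to follow the factorization scheme from that proof and promote every norm estimate to a $\gamma$-bound of the associated family over $k \in \Z$ and $f \in G$, using Pisier's property $(\alpha)$ to navigate the Gaussian structure inside the intermediate square-function space $\gamma(\R, X)$.

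After the same mollifier approximation as at the end of Proposition \ref{Prop 1} to reduce to $f \in C_c^\infty(0, \infty)$, I would apply the representation formula and decompose $\hat f = \sum_n \hat f \equi_n$ via the equidistant partition, obtaining (with the roles $\beta_1 = \beta$, $\beta_2 = \gamma$)
\[
(1 + 2^k A)^{-(\beta + 2\gamma)} f(2^k A) = \tfrac{1}{2\pi} \sum_{n \in \Z} P_k M_{\hat f \equi_n} I_{k, n},
\]
with $I_{k, n} : X \to \gamma(\R, X)$, $P_k : \gamma(\R, X) \to X$, and $M_{\hat f \equi_n} : \gamma(\R, X) \to \gamma(\R, X)$ defined exactly as in the proof of Proposition \ref{Prop 1}. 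Inserting this into $\sum_j \gamma_j \otimes (1 + 2^{k_j} A)^{-(\beta + 2\gamma)} f_j(2^{k_j} A) x_j$ and applying the triangle inequality over $n$ in $\Gauss(X)$ reduces the problem to the per-$n$ bound
\[
\Bignorm{\sum_j \gamma_j \otimes P_{k_j} M_{\hat{f_j} \equi_n} I_{k_j, n} x_j}_{\Gauss(X)} \lesssim \na^\alpha \sup_{f \in G} \|f \ast \check\equi_n\|_\infty \Bignorm{\sum_j \gamma_j \otimes x_j}_{\Gauss(X)};
\]
summing over $n$ using the hypothesis $\sum_n \na^\alpha \sup_{f \in G} \|f \ast \check\equi_n\|_\infty < \infty$ would then close the argument.

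For the per-$n$ bound I would exhibit the three factors as a composition of $\gamma$-bounded families. First, $\{I_{k, n}\}_{k \in \Z} \subset B(X, \gamma(\R, X))$ is $\gamma$-bounded with constant $\lesssim \na^\alpha$: hypotheses \eqref{Equ II.1} and \eqref{Equ II.2} together yield the pointwise $\gamma$-bound $\gamma(\{(1 + 2^k A)^{-(\beta + \gamma)} e^{-it 2^k A} : k \in \Z,\, t \in [n-2, n+1]\}) \lesssim \na^\alpha$, and a standard Riemann-sum approximation (combined with property $(\alpha)$) upgrades this to $\gamma$-boundedness of $\{I_{k, n}\}_k$. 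Second, $\{P_k\}_{k \in \Z} \subset B(\gamma(\R, X), X)$ is $\gamma$-bounded with uniform constant, by the same argument combined with Lemma \ref{Lem gamma lifting} (2) applied to \eqref{Equ II.2}. Third, $\{M_{\hat f \equi_n}\}_{f \in G} \subset B(\gamma(\R, X))$ is $\gamma$-bounded with constant $\sup_{f \in G} \|f \ast \check\equi_n\|_\infty$, because each $M_{\hat f \equi_n}$ is pre-composition by a bounded scalar operator on the Hilbert factor $L^2(\R)$ of operator norm $\|f \ast \check\equi_n\|_\infty$, and property $(\alpha)$ promotes the uniform scalar bound to $\gamma$-boundedness on $\gamma(L^2(\R), X)$. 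Composing the three $\gamma$-bounded families (once more using property $(\alpha)$ to re-index the Gaussian sums in $\gamma(\R, X)$) yields the per-$n$ estimate.

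The main obstacle is the migration of the singly-indexed Gauss sum $\sum_j \gamma_j \otimes (\cdot)$ through the intermediate space $\gamma(\R, X) = \gamma(L^2(\R), X)$, which already carries its own Gaussian structure in the $L^2(\R)$ direction. Flattening the iterated Gaussian expression in $\Gauss(\gamma(L^2(\R), X))$ to a singly-indexed sum so that the pointwise $\gamma$-bounds of the three families above can be applied sequentially is exactly what property $(\alpha)$ delivers, via the equivalence $\bignorm{\sum_{j, m} \gamma_j \gamma_m \otimes x_{j, m}}_{\Gauss(\Gauss(X))} \cong \bignorm{\sum_{j, m} \gamma_{j, m} \otimes x_{j, m}}_{\Gauss(X)}$; this is where the hypothesis that $X$ has property $(\alpha)$ is essential throughout the per-$n$ argument.
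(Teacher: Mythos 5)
Your proposal is correct in outline and would yield the lemma, but it takes a visibly different route from the paper. What you do is unroll the factorization $P \, M_{\hat f \equi_n} \, I$ from the proof of Proposition \ref{Prop 1} and promote each of the three factors, for each fixed $n$, to a $\gamma$-bounded family over the extra indices $k$ and $f$; you then close by composing $\gamma$-bounded families and summing over $n$. The paper instead never re-opens Proposition \ref{Prop 1}: it packages the $k$-index into a single $0$-sectorial operator $\tilde{A} = \sum_k 2^k P_k \otimes A$ on $\Gauss(X)$, observes (using property $(\alpha)$) that \eqref{Equ II.1} and \eqref{Equ II.2} are precisely the two hypotheses of Proposition \ref{Prop 1} for $\tilde{A}$, and then applies Proposition \ref{Prop 1} \emph{as a black box}. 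The $f$-index is handled the same way, by feeding in the operator-valued symbol $f(t) = \sum_{l} \gamma_l \otimes f_l(t)\,\Id_X$ (which commutes with the $\tilde{A}$-semigroup) and using Kahane's contraction principle to pass from $\gamma(\{f \ast \check\equi_n(t)\})$ to $\sup_{f\in G}\|f \ast \check\equi_n\|_\infty$. The trade-off: your route requires re-establishing the Kalton--Weis-type $\gamma$-multiplier facts inside $\gamma(\R, X)$ three separate times (once for each factor), each of which needs property $(\alpha)$ and a Riemann-sum argument; the paper gets all of this essentially for free once the tensorized operator $\tilde{A}$ is introduced, at the cost of the slightly more abstract step of verifying that $\tilde{A}$ is $0$-sectorial on $\Gauss(X)$ and making sense of the operator-valued functional calculus. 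Both proofs lean on the same underlying mechanism — property $(\alpha)$ to flatten iterated Gaussian sums — just at different levels of packaging.

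One spot you should make more precise if you expand this: the step ``$\{P_k\}_{k}$ is $\gamma$-bounded, by the same argument combined with Lemma \ref{Lem gamma lifting} (2)'' is stated but not really argued. Lemma \ref{Lem gamma lifting} (2) is a duality pairing for fixed $f,g$, and upgrading it to a $\gamma$-bound of the family $\{P_k\}_k \subset B(\gamma(\R,X),X)$ genuinely requires a dual square-function estimate controlled by \eqref{Equ II.2}, again via the $\gamma$-multiplier theorem and property $(\alpha)$. The paper avoids having to spell this out precisely because for the single operator $\tilde A$, the corresponding $\|P\|$-estimate is the one already proved inside Proposition \ref{Prop 1}.
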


\begin{proof}
Let $\tilde{A} = \sum_{k \in \Z} 2^k P_k \otimes A$ be the operator defined on $\Gauss(X)$ where $P_k(\sum_{j \in \Z} \gamma_j \otimes x_j) = \gamma_k \otimes x_k,$
so that $\tilde{A} (\sum_{j \in \Z} \gamma_j \otimes x_j ) = \sum_{k \in \Z} \gamma_k \otimes 2^k A x_k.$
Put $\tilde{S}_\beta(t) = \sum_{k \in \Z} P_k \otimes ( 1+ 2^k A)^{-\beta}e^{it2^k A} = (1 + \tilde{A})^{-\beta} e^{it\tilde{A}}.$
Then \eqref{Equ II.1} $\Longleftrightarrow \|\tilde{S}_\beta(t) \| \lesssim \ta^\alpha$ and \eqref{Equ II.2} $\Longleftrightarrow \{ \tilde{S}_{\gamma}(t) : \: t \in [0,1] \}$ is $\gamma$-bounded in
$B(\Gauss(X)).$
Indeed, let $y_n \in \Gauss(X),\,t_n \in [0,1]$ and write $y_n = \sum_k \gamma_k \otimes x_{nk}.$
Then using property $(\alpha),$ and writing $S_\gamma^k(t) =(1 + 2^k A)^{-\gamma} e^{it2^k A},$
we have 
\begin{align*}
\| \sum_{n \in \Z} \gamma_n \otimes \tilde{S}_\gamma(t_n)y_n \|_{\Gauss(\Gauss(X))} & \cong \| \sum_{n,k} \gamma_{nk} \otimes S_\gamma^k(t_n)x_{nk}\|_{\Gauss(X)} \\
& \leq C \| \sum_{n,k} \gamma_{nk} \otimes x_{nk} \| \cong \| \sum_{n} \gamma_n \otimes y_n \|.
\end{align*}
Therefore, Proposition \ref{Prop 1} can be applied to the operator $\tilde{A}$ in place of $A$ and one obtains 
\[ \|( 1 + \tilde{A} )^{-(\beta + 2 \gamma)} f(\tilde{A}) \| \leq C \| f \|_{\Eai}. \]
Moreover, let $G$ satisfy the assumption of the lemma and $f_1,\ldots,f_N \in G.$
Put $f(t) = \sum_{k=1}^N \gamma_k \otimes f_k(t) \Id_X,$ so that $f : \R \to B(\Gauss(X)).$
The image of $f$ commutes with $\tilde{S}_\beta(t)$ for any $t \in \R.$
As in \cite[Proof of Proposition 5.5]{Kr1} it follows now from Proposition \ref{Prop 1}
that
\[\| ( 1 + \tilde{A} )^{-(\beta + 2 \gamma)} f(\tilde{A}) \| \lesssim \sum_{n \in \Z} \na^\alpha \gamma\left(\left\{f \ast \check\equi_n(t) :\: t \in \R \right\}\right) 
\lesssim \sum_{n \in \Z} \na^\alpha \sup_{f \in G} \|f \ast \check\equi_n \|_\infty,\]
where we used Kahane's contraction principle in the last step.
But $\| ( 1 + \tilde{A} )^{-(\beta + 2 \gamma)} f(\tilde{A}) \| = \gamma(\{ (1 + 2^k A)^{-(\beta + 2 \gamma)}f_l(2^kA) :\: k \in \Z,\,l= 1,\ldots,N \}),$
so the lemma follows by taking the supremum over all $f_1,\ldots,f_N \in G.$
\end{proof}


\begin{lem}\label{Lem 1}
Let $A$ be a $0$-sectorial operator. Let the conclusion of Lemma \ref{Lem 2} hold, i.e.
$\{ (1+2^k A)^{-(\beta + 2 \gamma)} f(2^k A) :\: k \in \Z ,\: f \in G \}$ is $\gamma$-bounded if $\sum_{n \in \Z} \na^\alpha \sup_{f \in G} \|f \ast \check\equi_n \|_\infty < \infty.$
Suppose that $A$ admits a Paley-Littlewood spectral decomposition.
That is, for a dyadic partition of unity $(\dyad_k)_{k\in \Z},$ we have
$\|x\| \cong \| \sum_{ k \in \Z} \gamma_k \otimes \dyad_k(A) x \|_{\Gauss(X)}.$
\begin{enumerate}
\item If $f \in \Eau,$ then $f(A) \in B(X).$
\item If $X$ has property $(\alpha)$ and $G\subset \Eai$ satisfies $\sum_{n \in \Z} \na^\alpha \sup_{f \in G} \sup_{k \in \Z} \|f(2^k \cdot)\dyad_0 \ast \check\equi_n\|_\infty < \infty,$
then $\{f(A) : \: f \in G \}$ is $\gamma$-bounded.
\end{enumerate}
\end{lem}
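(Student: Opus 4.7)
The plan is to prove both parts by first establishing a $\gamma$-boundedness statement for the dyadic pieces $\{f(A)\dyad_k(A) : k \in \Z\}$ (in part~(1)) or $\{f(A)\dyad_k(A) : f \in G,\, k \in \Z\}$ (in part~(2)), and then to assemble $f(A)x$ via the Paley--Littlewood decomposition. The key observation is the functional-calculus identity $f(A)\dyad_k(A) = F_k(2^{-k}A)$ with $F_k(\mu) = f(2^k\mu)\dyad_0(\mu)$, which brings these pieces into direct contact with the sum-sup expression defining $\|f\|_{\Eau}$.

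To obtain the $\gamma$-bound I would factor $F_k = (1+\cdot)^{-(\beta+2\gamma)} H_k$ where $H_k(\mu) = f(2^k\mu)\dyad_0(\mu)(1+\mu)^{\beta+2\gamma}$; since $\supp\dyad_0$ is a compact subset of $(0,\infty)$, each $H_k$ is still compactly supported in $(0,\infty)$. Feeding the family $G_0 = \{H_k : k \in \Z\}$ (respectively $\{H_{f,k} : f \in G,\, k \in \Z\}$ in part~(2), with $H_{f,k}$ defined analogously) into the hypothesized conclusion of Lemma~\ref{Lem 2} and restricting the resulting $\gamma$-bounded family to the diagonal $k' = -k$, the identity
\[(1+2^{-k}A)^{-(\beta+2\gamma)} H_k(2^{-k}A) = F_k(2^{-k}A) = f(A)\dyad_k(A)\]
delivers $\gamma\bigl(\{f(A)\dyad_k(A) : k \in \Z\}\bigr) \lesssim \|f\|_{\Eau}$, together with its uniform-in-$f$ version needed for part~(2).

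For part~(1), after a density reduction to $f \in C^\infty_c(0,\infty)$, the Paley--Littlewood equivalence combined with the identity $\dyad_k(A)f(A) = f(A)\dyad_k(A)\tilde\dyad_k(A)$ (from $\dyad_k\tilde\dyad_k = \dyad_k$) yields
\begin{align*}
\|f(A)x\| &\cong \Bignorm{\sum_{k\in\Z} \gamma_k \otimes f(A)\dyad_k(A)\tilde\dyad_k(A)x}_{\Gauss(X)} \\
&\leq \gamma\bigl(\{f(A)\dyad_k(A)\}_k\bigr) \Bignorm{\sum_k \gamma_k \otimes \tilde\dyad_k(A)x}_{\Gauss(X)},
\end{align*}
and the last factor is $\lesssim \|x\|$ after splitting $\tilde\dyad_k = \dyad_{k-1}+\dyad_k+\dyad_{k+1}$ and combining shift-invariance of Gaussian sums with another Paley--Littlewood equivalence. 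For part~(2), the same scheme runs jointly in $(f,k)$: property~$(\alpha)$ identifies $\Gauss(\Gauss(X))$ with the single $(j,k)$-indexed Gaussian space, the joint $\gamma$-bound of $\{f(A)\dyad_k(A) : f \in G,\,k \in \Z\}$ applies, and reversing the chain yields the required control by $\bignorm{\sum_j \gamma_j \otimes x_j}_{\Gauss(X)}$.

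The main obstacle will be verifying that multiplication by the fixed $C^\infty_c$ factor $\mu \mapsto (1+\mu)^{\beta+2\gamma}\tilde\dyad_0(\mu)$ preserves the sum-sup summability uniformly in $k$, i.e.\ that $\sum_{n}\na^\alpha \sup_{k}\|H_k \ast \check\equi_n\|_\infty \lesssim \|f\|_{\Eau}$. This comes down to a standard Schwartz-decay inequality of the form $\|(\chi g)\ast\check\equi_n\|_\infty \lesssim \sum_m \langle n-m\rangle^{-N}\|g\ast\check\equi_m\|_\infty$ for $\chi \in C^\infty_c$ and $N$ arbitrarily large, exploiting that each $g\ast\check\equi_m$ is band-limited to $[m-1,m+1]$ and that the Fourier transform of $\chi$ is Schwartz. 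Once this technical point is secured, everything else is routine Paley--Littlewood assembly plus Kahane's contraction principle.
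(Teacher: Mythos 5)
Your proposal is correct and follows essentially the same path as the paper's own proof: expand via the Paley--Littlewood decomposition into the dyadic pieces $f(A)\dyad_k(A)\widetilde\dyad_k(A)$, rescale to $(f(2^k\cdot)\dyad_0)(2^{-k}A)$, factor out the weight $(1+\cdot)^{-(\beta+2\gamma)}$ so that Lemma~\ref{Lem 2}'s conclusion applies, and then verify the $\sum_n\na^\alpha\sup$--summability of the resulting family $\{H_{f,k}\}$. The only cosmetic difference is in that last verification step: you invoke a ``Schwartz-decay'' bound $\|(\chi g)\ast\check\equi_n\|_\infty\lesssim\sum_m\langle n-m\rangle^{-N}\|g\ast\check\equi_m\|_\infty$ obtained from the rapid decay of $\check\chi$ and the band-limited structure of each $g\ast\check\equi_m$, whereas the paper phrases the same fact as a bilinear paraproduct-type estimate (splitting $f(2^k\cdot)\dyad_0$ and $(1+\cdot)^{\beta+2\gamma}\tilde\dyad_0$ into their equidistant bands and restricting to $|n-l-j|\leq 3$, then using that the smooth compactly supported weight lies in every $E^{\beta'}_\infty$); these are equivalent in content.
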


\begin{proof}
The first part of the lemma follows from the proof of the second part by considering $G = \{ f \}$ a singleton.
So let $G$ satisfy the hypotheses in (2) of the lemma and $f_1,\ldots,f_N \in G.$
Then by the Paley-Littlewood spectral decomposition and property $(\alpha),$
\[ \| \sum_{n = 1}^N \gamma_n \otimes f_n(A) x \| \cong \| \sum_{n,k} \gamma_{nk} \otimes (f_n \dyad_k)(A) \widetilde\dyad_k(A) x \| .\]
It thus remains to check that $\{ (f_n \dyad_k)(A) :\: n = 1 ,\ldots,N,\: k \in \Z \}$ is $\gamma$-bounded.
We have $(f_n \dyad_k)(A) = (f_n \dyad_0(2^{-k}\cdot))(A) = (f_n(2^k \cdot) \dyad_0)(2^{-k}A).$
Let $\tilde{G} = \{ f_n(2^k \cdot) \dyad_0 (1 + (\cdot))^{\beta + 2 \gamma} :\: n,k \}.$
Note that functions in $\tilde{G}$ have compact support in $(0,\infty)$.
If 
\begin{equation}\label{Equ Proof Lem 1}
\sum_n \na^\alpha \sup_{g \in \tilde{G}} \| g \ast \check\equi_n \|_\infty < \infty, 
\end{equation}
then
\[ \left\{ (1 + 2^l A)^{-(\beta + 2 \gamma)} g(2^l A) :\: l \in \Z,\:g \in \tilde{G} \right\} \supset \left\{(1+2^{-k}A)^{-(\beta+2 \gamma)} f_n(A) \dyad_k(A) ( 1 + 2^{-k} A)^{\beta + 2 \gamma} :\: n,k \right\} \]
would be $\gamma$-bounded and the lemma would follow.
It remains to show \eqref{Equ Proof Lem 1}.
Denoting $\sum^*_{l,j} = \sum_{l,j:\:|n-l-j| \leq 3},$ we have
\begin{align*}
& \sum_{n \in \Z} \na^\alpha \sup_{m \leq N,\: k \in \Z} \| f_m(2^k \cdot)(1 + \cdot)^{\beta + 2 \gamma} \dyad_0 \ast \check\equi_n \|_\infty \\
& \leq \sum_{n \in \Z} \na^\alpha \sum_{l,j}^* \| \left[ \left( f_m(2^k \cdot)\dyad_0 \ast \check\equi_l \right) \left( ( 1 + \cdot)^{\beta + 2 \gamma}\tilde\dyad_0 \ast \check\equi_j \right) \right] \ast \check\equi_n \|_\infty. \\
& \leq \sum_{n \in \Z} \na^\alpha \sum_{l,j}^* \| f_m(2^k \cdot)\dyad_0 \ast \check\equi_l \|_\infty \| (1 + \cdot)^{\beta + 2 \gamma} \tilde\dyad_0 \ast \check\equi_j \|_\infty \| \check\equi_n\|_1 \\
& \leq \sum_{n \in \Z} \na^\alpha \sum_{l,j}^* \langle j \rangle^{-\beta'} \| (1 + \cdot)^{\beta + 2 \gamma} \tilde\dyad_0 \|_{E^{\beta'}_\infty} \sup_{k,m} \| f_m(2^k \cdot)\dyad_0 \ast \check\equi_l \|_\infty
\end{align*}
where we choose $\beta'>\alpha + 1.$
Then the above inequalities continue
\begin{align*}
& = \sum_{l \in \Z} \langle l \rangle^\alpha \sum_{n,j}^* \langle l \rangle^{-\alpha} \langle n \rangle^\alpha \langle j \rangle^{-\beta'} \sup_{k,m} \| f_m(2^k \cdot)\dyad_0 \ast \check\equi_l \| \\
& \lesssim \sum_{l \in \Z} \langle l \rangle^\alpha \sup_{f \in G} \sup_{k \in \Z} \| \left( f(2^k \cdot) \dyad_0 \right) \ast \check\equi_l \|_\infty,
\end{align*}
which is finite according to the hypothesis.
\end{proof}

We are now able to prove the main result of this section which is the following theorem.

\begin{thm}\label{Thm 4}
Let $X$ have property $(\alpha).$
Assume that $A$ has a bounded $\M^\beta$ calculus for some $\beta$ and let $\alpha > 0$ be a parameter.
Then $(B) \Longrightarrow (A) \Longrightarrow (B'),$ where
\begin{enumerate}
\item[(A)] $\gamma\left(\{(1 + 2^k A)^{-\beta_1} e^{i2^k t A} :\: k \in \Z \} \right) \leq C \ta^\alpha $ for some $\beta_1 \geq \alpha.$
\item[(B)] $\gamma \left(\{(1 + 2^k A)^{-\beta_2} f(2^k A) :\: k \in \Z \} \right) \leq C \|f\|_{\Eai}$ for some $\beta_2 \geq \alpha$ and any $f \in \Eai.$
\item[(B')] $\gamma \left(\{(1 + 2^k A)^{-\beta_2} f(2^k A) :\: k \in \Z \} \right) \leq C \|f\|_{\Eai}$ for some $\beta_2 \geq \alpha$ and any $f \in \Eai$ with compact support in $(0,\infty).$
\end{enumerate}
Conditions $(A)$ and $(B)$ imply moreover that
\begin{enumerate}
\item[(C)] $\|f(A)\| \leq C \|f\|_{\Eau} \quad (f \in \Eau).$
\item[(D)] If $G \subset \Eau$ such that $\sum_{n \in \Z} \na^\alpha \sup_{f \in G} \sup_{k \in \Z} \| \left( f(2^k \cdot) \dyad_0 \right) \ast \check\equi_n \|_\infty < \infty,$
then $ \{ f(A) :\: f \in G \}$ is $\gamma$-bounded.
\end{enumerate}
\end{thm}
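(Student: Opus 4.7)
The plan is to derive $(B)\Rightarrow (A)$ by plugging a pure oscillation into $(B)$, then to derive $(A)\Rightarrow(B')$ by invoking Lemma~\ref{Lem 2}, and finally to reuse that application of Lemma~\ref{Lem 2} together with Lemma~\ref{Lem 1} to obtain $(C)$ and $(D)$.

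For $(B)\Rightarrow(A)$, I would take $f_t(\lambda)=e^{it\lambda}$ and verify that $\|f_t\|_{\Eai}\lesssim \ta^\alpha$. Indeed, $\widehat{f_t}$ is a Dirac mass at $-t$ up to a constant, so $(f_t\ast\check\equi_n)(\lambda)=c\,\equi_n(-t)\,e^{it\lambda}$ and $\|f_t\ast\check\equi_n\|_\infty=c|\equi_n(-t)|$. Only those $n$ with $|n+t|\leq 1$ contribute, and for these $\na\lesssim\ta$, giving the desired estimate on $\|f_t\|_{\Eai}$. Applying $(B)$ to $f_t$ and noting $f_t(2^kA)=e^{it2^kA}$ then yields $(A)$ with $\beta_1=\beta_2$.

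For $(A)\Rightarrow(B')$, I would apply Lemma~\ref{Lem 2}. Condition \eqref{Equ II.1} is exactly $(A)$ with $\beta=\beta_1$. For \eqref{Equ II.2}, the Remark following Proposition~\ref{Prop 1} shows that any bounded Mihlin calculus produces a $\gamma$-bounded family $\{(1+A)^{-\gamma}e^{itA}:t\in[0,1]\}$ for some $\gamma\geq\alpha$; since an $\M^\beta$ calculus of $A$ scales to a uniform $\M^\beta$ calculus for every dilate $2^kA$, the required version \eqref{Equ II.2} follows. Taking $G=\{f\}$ a singleton in Lemma~\ref{Lem 2}, for an arbitrary $f\in\Eai$ with compact support in $(0,\infty)$, then yields $(B')$ with $\beta_2=\beta_1+2\gamma$.

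For $(C)$ and $(D)$, the step above actually yields the full conclusion of Lemma~\ref{Lem 2}, not merely its singleton case, because Lemma~\ref{Lem 2} is stated for arbitrary admissible families $G$. Combined with the Paley--Littlewood decomposition of $A$ --- which follows from the standing $\M^\beta$ hypothesis, as recalled in the Introduction --- the hypotheses of Lemma~\ref{Lem 1} are in place, and parts (1) and (2) of that lemma are precisely $(C)$ and $(D)$. The main subtle point in the whole argument is the scaling uniformity of the Mihlin calculus passing to the dilates $2^kA$ used in the second step; this is a standard scale-invariance property of the $\Ma$ norm and should not constitute a real obstacle.
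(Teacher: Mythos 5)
Your overall plan is the same as the paper's: obtain $(B)\Rightarrow(A)$ by testing $(B)$ on $f_t(\lambda)=e^{it\lambda}$, obtain $(A)\Rightarrow(B')$ from Lemma~\ref{Lem 2} plus the $\gamma$-bounded local wave family, and get $(C)$, $(D)$ by feeding the conclusion of Lemma~\ref{Lem 2} and the Paley--Littlewood decomposition into Lemma~\ref{Lem 1}. Your worked-out computation of $\|e^{it\cdot}\|_{\Eai}$ is a correct expansion of a step the paper only cites.

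There is, however, one point you gloss over in a way that is not actually sound as stated: you try to justify \eqref{Equ II.2}, namely
\[
\gamma\bigl(\{(1+2^kA)^{-\gamma}e^{it2^kA}\,:\,k\in\Z,\ t\in[0,1]\}\bigr)<\infty,
\]
by appealing to scale invariance of the $\M^\beta$ norm, i.e.\ that each dilate $2^kA$ has a Mihlin calculus with constant independent of $k$. That only gives, for each fixed $k$, a $\gamma$-bound on $\{(1+2^kA)^{-\gamma}e^{it2^kA}:t\in[0,1]\}$ with a uniform constant; it does not by itself yield $\gamma$-boundedness of the union over $k$ (a countable union of sets each $\gamma$-bounded by $1$ need not be $\gamma$-bounded). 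The uniformity in $k$ here is the whole content of condition \eqref{Equ II.2}, and the paper gets it from \cite[Theorem~4.73]{Kr} directly, as condition (A'). So replace your scaling heuristic by that citation (or, if you want a self-contained argument, you would need to run the proof of that theorem, which uses property $(\alpha)$ to aggregate over $k$). With that repair, the rest of your argument — singleton $G$ for $(B')$, and the full Lemma~\ref{Lem 2} conclusion plus Paley--Littlewood feeding Lemma~\ref{Lem 1} for $(C)$ and $(D)$ — is correct and matches the paper.
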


\begin{proof}
Since $A$ has a bounded $\M^\beta$ calculus, we have 
\begin{equation}\label{Equ Ap}\tag{A'}\{ ( 1 + 2^k A)^{-\gamma} e^{it 2^k A} :\: k \in \Z,\: t \in [0,1] \}\text{ is }\gamma\text{-bounded for }\gamma\text{ sufficiently large}
\end{equation}
\cite[Theorem 4.73]{Kr}.
Then Lemma \ref{Lem 2} shows that \eqref{Equ Ap} and (A) imply (B') with $\beta_2 = \beta_1 + 2 \gamma.$
On the other hand, (B) implies (A) with $\beta_1 = \beta_2$ because of $\|e^{it(\cdot)}\|_{\Eai} \lesssim \ta^\alpha$ \cite[Proof of Theorem 4.9]{Kr1}.
The bounded $\M^\beta$ calculus also implies that the Paley-Littlewood spectral decomposition $\|x\| \cong \| \sum_{ k \in \Z} \gamma_k \otimes \dyad_k(A) x \|$ holds
\cite{KrW}.
Then \eqref{Equ Ap}, (A) (resp. (B)) and the Paley-Littlewood decomposition show with Lemma \ref{Lem 1} that (C) and (D) hold.
\end{proof}

\section{Application: Poisson semigroup}\label{Sec 5 Poisson Semigroup}

We now apply Theorem \ref{Thm 4} to the square root of the standard Laplacian on $L^p(\R^d).$
That is, we check condition (A).

\begin{thm}\label{Thm Poisson}
Let $A = (- \Delta)^{\frac12}$ on $X = L^p(\R^d)$ for some $d \in \N$ and $1 < p < \infty,$
i.e. the generated semigroup $\exp(-e^{i\theta}tA)$ is analytic on the right halfplane and has as integral kernel the Poisson kernel
\[ p_{t,\theta}(x) = \frac{e^{i\theta}t}{((e^{i\theta}t)^2 + |x|^2)^{\frac{d+1}{2}}}. \]
Then for any $\alpha > \frac{d-1}{2},$ $\{ \exp(-e^{i\theta} t 2^k A) :\: k  \in \Z \}$ is $\gamma$-bounded with bound
$\lesssim (\frac{\pi}{2} - |\theta|)^{-\alpha}$ for any  $|\theta| < \frac{\pi}{2}.$
Consequently, by \eqref{Equ Wave Bound},
\begin{equation}\label{Equ Poisson Thm}
\gamma \left( \{ (1 + 2^k A)^{-\alpha} e^{it 2^k A} : \: k \in  \Z \} \right) \lesssim \ta^\alpha,
\end{equation}
so condition \eqref{Equ II.1} is satisfied for any $\alpha > \frac{d-1}{2}.$
\end{thm}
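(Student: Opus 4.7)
The plan is to verify the first assertion of the theorem, i.e.\ the $\gamma$-boundedness of the Poisson semigroup family $\{\exp(-e^{i\theta}t2^kA) : k \in \Z\}$ on $L^p(\R^d)$ with the asserted bound; the second statement \eqref{Equ Poisson Thm} then follows automatically from the implication \eqref{Equ Semigroup Bound} $\Longrightarrow$ \eqref{Equ Wave Bound} recalled in Section~\ref{Sec 2 Prelims}. Each operator $\exp(-e^{i\theta}\tau A)$ is convolution on $L^p$ with the Poisson kernel $p_{\tau,\theta}$ written down in the statement, and by scaling $p_{\tau,\theta}(x) = \tau^{-d}p_{1,\theta}(x/\tau)$, so the dyadic family is a family of dilates of the single kernel $p_{1,\theta}$. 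A direct polar-coordinate computation localised near the light cone $\{|x|=1\}$, via the substitution $|x|-1 = (\cos\theta)u$, gives the sharp bound
\[
\|p_{1,\theta}\|_{L^1(\R^d)} \;\lesssim\; (\cos\theta)^{-(d-1)/2} \qquad (d \geq 2),
\]
which already dominates $(\pi/2-|\theta|)^{-\alpha}$ for every $\alpha > (d-1)/2$.

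Because $|p_{1,\theta}|$ is radial but not radial decreasing (the concentration near the sphere $\{|x|=1\}$ precludes a useful radial decreasing majorant, whose $L^1$-norm is of the worse order $(\cos\theta)^{-(d+1)/2}$), the plan is to dominate the convolutions pointwise by the spherical maximal function
\[
M_Sf(x) = \sup_{r>0} \frac{1}{|S^{d-1}|}\int_{S^{d-1}}|f(x-r\omega)|\,d\omega.
\]
Writing $|p_{\tau,\theta}|*|f|$ in polar coordinates and bounding every inner spherical mean by $M_Sf(x)$ gives, uniformly in $\tau > 0$,
\[
\bigl|p_{\tau,\theta}*f(x)\bigr| \;\leq\; \|p_{1,\theta}\|_{L^1}\,M_Sf(x) \;\lesssim\; (\cos\theta)^{-(d-1)/2}\,M_Sf(x).
\]
Applied componentwise, this bounds the Gaussian square function $\bigl(\sum_k|\exp(-e^{i\theta}t2^kA)f_k|^2\bigr)^{1/2}$ pointwise by $(\cos\theta)^{-(d-1)/2}\bigl(\sum_k(M_Sf_k)^2\bigr)^{1/2}$, so the desired $\gamma$-bound reduces to the vector-valued spherical maximal inequality
\[
\Bignorm{\bigl(\textstyle\sum_k(M_Sf_k)^2\bigr)^{1/2}}_{L^p} \;\lesssim\; \Bignorm{\bigl(\textstyle\sum_k|f_k|^2\bigr)^{1/2}}_{L^p}.
\]

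This $\ell^2$-valued inequality is available for $p > d/(d-1)$ as a consequence of Stein's spherical maximal theorem ($d \geq 3$) and Bourgain's ($d = 2$), combined with the standard vector-valued extension. The complementary range $1 < p \leq d/(d-1)$ is then reached by duality: the adjoint family on $L^{p'}$ is $\{\exp(-e^{-i\theta}\tau A)\}$, which satisfies the same estimate thanks to the symmetry $\cos(-\theta)=\cos\theta$, and $\gamma$-boundedness transfers between a reflexive Banach space and its dual up to a universal constant. The one-dimensional case is handled separately: here $(d-1)/2 = 0$ and the family $\{\exp(-e^{i\theta}\tau A)\}$ on $L^p(\R)$ is in fact uniformly bounded, by decomposing along the Riesz projections $P_\pm$ and recognising $\exp(-e^{i\theta}\tau A)P_\pm$ as a Paley--Wiener analytic translation in the upper/lower half-plane, bounded on the Hardy spaces $H^p(\R)$. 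I expect the vector-valued version of the spherical maximal inequality to be the main technical obstacle; the kernel $L^1$-computation and the polar-coordinate reduction to $M_S$ are essentially routine.
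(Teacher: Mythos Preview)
Your approach is genuinely different from the paper's. The paper never invokes the spherical maximal function; instead it stays inside classical vector--valued Calder\'on--Zygmund theory. Concretely, it verifies for $\Phi=|p_{1,\theta}|$ the H\"ormander--type conditions
\[
\int|\Phi(x-y)-\Phi(x)|\,dx\le\eta(|y|),\qquad \int_{|x|\ge R}|\Phi(x)|\,dx\le\eta(R^{-1}),
\]
with a Dini modulus $\eta$, and then quotes \cite[p.~76,~5.4]{St} to pass directly to the $L^p(\ell^2)$ bound, i.e.\ the $\gamma$-bound. The sharp exponent $(d-1)/2$ is extracted by complex interpolation of an analytic family $\Phi^{(s)}(x)=|e^{2i\theta}+x^2|^{-\frac{d+1}{2}(1-s)}(1+|x|)^{-cs}$, estimating separately $\int|\nabla\Phi^{(s)}|$ on $\Re s=1$, $\int|\Phi^{(s)}|$ on $\Re s=-\epsilon$, and the weighted integral $\int|\Phi|(1+|x|)^\delta$. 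This is more computational than your route, but it is self-contained modulo Stein's book.

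Your argument is cleaner in outline---the $L^1$ computation $\|p_{1,\theta}\|_1\lesssim(\cos\theta)^{-(d-1)/2}$ and the pointwise bound $|p_{\tau,\theta}\ast f|\le\|p_{1,\theta}\|_1\,M_Sf$ are correct and attractive---but it trades elementary kernel estimates for a hard theorem, and the trade is not free. The sentence ``combined with the standard vector-valued extension'' is the real gap: there is \emph{no} standard mechanism that upgrades scalar $L^p$ boundedness of $M_S$ to the Fefferman--Stein inequality $\|(M_Sf_k)\|_{L^p(\ell^2)}\lesssim\|(f_k)\|_{L^p(\ell^2)}$. The usual Fefferman--Stein proof needs a weak-$(1,1)$ endpoint, which $M_S$ does not have. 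The $\ell^2$-valued spherical maximal inequality \emph{is} true in the range $p>d/(d-1)$, but obtaining it requires either the weighted estimates for $M_S$ (Duoandikoetxea--Vega) followed by Rubio de Francia extrapolation, or a vector-valued rerun of the Stein/Bourgain argument. You yourself flag this as the main obstacle; until you supply a precise reference or argument, the proof is incomplete. Once that input is in place, the duality step covering $1<p\le d/(d-1)$ is fine. For $d=1$ your Hardy-space decomposition gives uniform \emph{operator-norm} boundedness, but you still need a word on why the family is $\gamma$-bounded; since the theorem only asks for $(\pi/2-|\theta|)^{-\alpha}$ with arbitrary $\alpha>0$, any crude argument (e.g.\ domination by the Hardy--Littlewood maximal function after controlling the kernel by a radially decreasing majorant of $L^1$-norm $\lesssim|\log\cos\theta|$) suffices.
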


\begin{proof}
Our proof follows closely the chapter on maximal functions in \cite{St}.
Note that on $L^p$ spaces for $p < \infty,$ one has $\| \sum_k \gamma_k \otimes x_k \|_p \cong \| \left( \sum_k |x_k|^2 \right)^{\frac12} \|_p.$
Thus according to \cite[p.~76, 5.4]{St} it suffices to show that for any $\alpha > \frac{d-1}{2}$
\begin{equation}\label{Equ 1 Proof Theorem}
\int_{|x| \geq 2 |y|} |p_{2^kt,\theta}(x-y)-p_{2^k t,\theta}(x)| dx \leq C (\frac{\pi}{2} - |\theta|)^{-\alpha} \quad (k \in \Z).
\end{equation}
According to the proof in \cite[p.~74]{St}, \eqref{Equ 1 Proof Theorem} follows from the hypotheses of \cite[4.2.1 Corollary]{St}.
This means that it remains to show
\begin{align}
\int | \Phi(x-y) - \Phi(x) | dx & \leq \eta(|y|)\label{Equ 2 Proof Theorem}
\intertext{and}
\int_{|x| \geq R} |\Phi(x)| dx & \leq \eta(R^{-1}) ,\: R \geq 1 \label{Equ 3 Proof Theorem}
\end{align}
for some Dini modulus $\eta,$ i.e. $\int_0^1 \eta(y) \frac{dy}{y} < \infty,$ and for $\Phi(x) = |p_{t,\theta}(x)|.$
According to \cite[p.~74]{St}, $\int_0^1 \eta(y) \frac{dy}{y}$ is then an upper bound for the $\gamma$ bound in the claim \eqref{Equ Poisson Thm}.
For simplicity suppose first $t = 1.$
We write 
\[ C_2(s) = \int_{\R^d} | \nabla \phi^{(s)}(x) | dx ,\: C_3(s) = \int_{\R^d} | \Phi^{(s)}(x) | dx,\: C_4 = \int_{\R^d} |\Phi(x)| (1 + |x|)^\delta dx,\]
where $\Phi^{(s)}(x) = |e^{2i\theta} + x^2|^{-\frac{d+1}{2}(1-s)} ( 1 + |x| )^{-cs},$ and where $c,\delta$ are positive constants.
The $\Phi^{(s)}$ form a family analytic in $s$ with $\Phi^{(0)} = \Phi.$
Thus by the three lines lemma
\[ \int | \Phi^{(0)}(x-y) - \Phi^{(0)}(x) | dx \lesssim C_3(-\epsilon)^{1-\vartheta} C_2(1)^\vartheta |y|^\vartheta \]
for the parameter $\vartheta$ given by $0 = - \epsilon (1 - \vartheta) + 1 \cdot \vartheta,$ so $\vartheta = \frac{\epsilon}{1 + \epsilon} \in (0,1).$
Concerning \eqref{Equ 3 Proof Theorem}, if $C_4 = \int |\Phi(x)| (1 + |x|)^\delta dx < \infty,$ for some $\delta > 0$ then
$\int_{|x| \geq R} |\Phi(x)| dx = \int_{|x| \geq R} |\Phi(x)| (1 + |x|)^\delta (1 +|x|)^{-\delta} dx \leq (1+R)^{-\delta} C_4.$
So choosing $\eta(u) = cu^\beta (C_4 + C_3(-\epsilon)^{1-\vartheta} C_2(1)^\vartheta)$ with $\beta = \min(\frac{\epsilon}{1 +\epsilon},\delta)$,
we have the estimate
\[ \int_0^1 \eta(u) \frac{du}{u} \lesssim \frac{1}{\beta} (C_4 + C_3(-\epsilon)^{1-\vartheta} C_2(1)^\vartheta) . \]
Let us now estimate the expressions $C_2,C_3,C_4.$
We have $C_4 = \int |e^{2 i \theta} + x^2 |^{-\frac{d+1}{2}} (1 + |x|)^\delta dx = \int | \cos(2 \theta) + x^2 + i \sin(2\theta)|^{-\frac{d+1}{2}} (1 +|x|)^\delta dx.$
The integrand is radial, and depending on the radius, the real or the imaginary part dominates.
If $| |x|^2 - 1 | \geq \frac{\pi}{2} - |\theta|,$ then the real part dominates, otherwise the imaginary part dominates.
Thus we naturally divide the integral $C_4$ into the three regions
$0 \leq x^2 \leq 1 - (\frac{\pi}{2} - |\theta|),\: 1-  (\frac{\pi}{2} - |\theta|) \leq x^2 \leq 1 +  (\frac{\pi}{2} - |\theta|),$ and $1+ (\frac{\pi}{2} - |\theta|) \leq x^2.$
Then a simple calculation shows
\begin{align*} C_4 & \cong \int_0^\infty |\cos(2 \theta) + s + i \sin(2 \theta)|^{-\frac{d+1}{2}} ( 1 + s )^{\frac{\delta}{2}} s^{\frac{d}{2}} \frac{ds}{s} \\
& \cong \int_0^{1 - (\frac{\pi}{2} - |\theta|)} \ldots + \int_{1 - (\frac{\pi}{2} - |\theta|)}^{1 + (\frac{\pi}{2} - |\theta|)} \ldots + \int_{1 + \frac{\pi}{2} - |\theta|}^\infty \ldots \\
& \cong 1 + (\frac{\pi}{2} - |\theta|)^{-\frac{d-1}{2}} + (\frac{\pi}{2} - | \theta |)^{-{\frac{d-1}{2}}} + (\frac{\pi}{2} - | \theta |)^{-{\frac{d-1}{2}}}
\end{align*}
as soon as the parameter $\delta < 1.$
Let us turn to $C_2.$
We have with $P(x) = e^{2 i \theta} + x^2$ and $a = \frac{d+1}{2}$
\begin{align*}
\nabla \Phi^{(s)} (x) & = - a (1-s) |P(x)|^{-a(1-s)-1} \frac{\Re P(x)}{|P(x)|} P'(x) ( 1 + |x| )^{-cs}
+ (-cs) \frac{x}{|x|} |P(x)|^{-a(1-s)} ( 1 + |x| )^{-cs - 1} \\
& = |P(x)|^{-a(1-s)-1} (1 + |x|)^{-cs-1} \cdot \left( - a (1-s) \frac{\Re P(x)}{|P(x)|} P'(x)(1 + |x|) - cs \frac{x}{|x|} |P(x)| \right).
\end{align*}
If $\Re s = 1,$ then $P'(x) = 2x,$ and the first term in the above brackets is dominated by $\lesssim |\Im s |\cdot|x|\cdot(1+|x|).$
In all we get for $\Re s =  1$
\[ C_2(s) \lesssim \int_{\R^d} | \Im s | \cdot |x| \cdot (1 + |x|)^{-c} |P(x)|^{-1} dx + \int_{\R^d} |s| \cdot (1 + |x|)^{-c-1} dx = : C_2^{(1)} + C_2^{(2)}.\]
We have $C_2^{(1)} \cong \int_0^\infty | \Im(s) | r ( 1 + r )^{-c} |P(r)|^{-1} r^{d-1} dr \cong \int_0^2 |\Im(s)| r |P(r)|^{-1} r^{d-1} dr + \int_2^\infty | \Im(s) | r (1+r)^{-c} |P(r)|^{-1} r^{d-1} dr
\lesssim |\Im(s)| (\frac{\pi}{2} - |\theta|)^{-1}$ for $c > d-1.$
On the other hand, $C_2^{(2)} < \infty$ as soon as $c$ is large enough ($c > d-1$).
In all, $C_2(s) \lesssim |\Im(s)| (\frac{\pi}{2} - |\theta|)^{-1}.$

Let us finally turn to $C_3(s).$
We consider $\Re s = - \epsilon < 0.$
Then 
\begin{align*}
C_3(s) & = \int |P(x)|^{-a(1+\epsilon)} (1 + |x|)^{c \epsilon} dx \cong \int_0^\infty |P(r)|^{-a(1+\epsilon)} (1 + r)^{c \epsilon} r^{d-1} dr
& \cong \int_0^2 \ldots + \int_2^\infty \ldots.
\end{align*}
The first integral can be estimated against $\lesssim (\frac{\pi}{2} - |\theta|)^{-a(1+\epsilon) + 1},$ and the second integral is finite as soon as $\epsilon(c-d-1)< 1.$
In all, we get $\int_0^1 \eta(y) \frac{dy}{y} \lesssim (C_4 + C_3(-\epsilon)^{1-\vartheta} C_2(1)^\vartheta) \lesssim (\frac{\pi}{2} - |\theta|)^{-\frac{\epsilon}{1+\epsilon} - a + \frac{1}{1+\epsilon}} 
\cong (\frac{\pi}{2} - |\theta|)^{-\frac{d-1}{2} + \tilde{\epsilon}},$ with $\frac{1-\epsilon}{1 + \epsilon} = 1 - \tilde{\epsilon}.$
Now it is easy to repeat the argument for $p_{t,\theta}$ in place of $p_{1,\theta}.$
This finishes the proof.
\end{proof}

Theorem \ref{Thm Poisson} can be used in combination with Theorem \ref{Thm 4}, but moreover it has also a consequence for the Mihlin functional calculus of $-\Delta.$
Note that the classical theorem of Mihlin gives mere boundedness of the set in \eqref{Equ Cor 1} below.

\begin{cor}
The operator $A = - \Delta$ on $L^p(\R^d)$ for $1 < p < \infty$ has a Mihlin calculus satisfying
\begin{equation}\label{Equ Cor 1}
\{ f(A) :\: \|f\|_{\Ma} \leq 1  \} \text{ is }\gamma\text{-bounded}
\end{equation}

for $\alpha > \frac{d}{2}.$
\end{cor}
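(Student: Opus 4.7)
The plan is to apply Theorem \ref{Thm 4} to the square root $B = (-\Delta)^{\frac12}$ and then transfer the conclusion to $A = B^2$. First I would verify the hypotheses of Theorem \ref{Thm 4} for $B$ on $X = L^p(\R^d)$: the space $L^p$ has property $(\alpha)$ for $1 < p < \infty$; the operator $B$ has a bounded $\M^\beta$ calculus by the classical Mihlin theorem applied to $B$; and condition (A) holds for any exponent $\alpha_0 > \frac{d-1}{2}$ as the direct content of \eqref{Equ Poisson Thm} in Theorem \ref{Thm Poisson}. Consequently Theorem \ref{Thm 4}'s conclusion (D) applies to $B$, so that any family $G$ with
\[
\sum_{n \in \Z} \na^{\alpha_0} \sup_{g \in G} \sup_{k \in \Z} \| [g(2^k \cdot) \dyad_0] \ast \check\equi_n \|_\infty < \infty
\]
yields a $\gamma$-bounded set $\{g(B) : g \in G\}$.

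Next I would translate from $B$-multipliers to $A$-multipliers. For $f : (0,\infty) \to \C$, set $g(\lambda) = f(\lambda^2)$, so that $g(B) = f(B^2) = f(A)$ by the composition rule of the functional calculus. A change of variables in the logarithmic Besov picture is clean: since $g_e(z) = g(e^z) = f(e^{2z}) = f_e(2z)$, the dilation invariance up to constants of the $B^\alpha_{\infty,1}(\R)$ norm yields $\|g\|_{\Ma(B)} \cong \|f\|_{\Ma(A)}$ for every $\alpha > 0$. Thus the Mihlin unit ball of $A$ corresponds under $f \mapsto f \circ (\cdot)^2$ to a family $G$ uniformly bounded in $\Ma(B)$. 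Invoking the embedding $\M^{\alpha + 1 + \epsilon} \hookrightarrow \Eau^{\alpha}$ proved earlier, $G$ is then uniformly bounded in $\Eau$ with the correct parameter, so the hypothesis of (D) is satisfied and the conclusion produces the asserted $\gamma$-bound of $\{f(A) : \|f\|_{\Ma} \leq 1\}$.

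The main technical point, and the step I expect to be the most delicate, is the bookkeeping of regularity indices along the chain Theorem \ref{Thm Poisson} $\to$ Theorem \ref{Thm 4} (D) $\to$ embedding $\M \hookrightarrow \Eau$ $\to$ substitution $\lambda \mapsto \lambda^2$, to see that the Mihlin threshold for $A$ comes out at the classical value $\frac{d}{2}$. One starts with $\alpha_0$ slightly above $\frac{d-1}{2}$ as supplied by Theorem \ref{Thm Poisson}; the apparent $1 + \epsilon$ loss in the embedding $\M \hookrightarrow \Eau$ must be checked to be compatible with the gain of $\frac12$ gleaned from the wave estimate on $B$, using the fact that $g(2^k \cdot) \dyad_0$ is compactly supported away from the origin so its Mihlin and ordinary Besov norms are equivalent and uniformly controlled by $\|f\|_{\Ma(A)}$. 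The result is the $\gamma$-bounded strengthening of the classical Mihlin theorem, which by contrast gives only mere boundedness of each individual $f(A)$.
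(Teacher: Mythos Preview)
Your route through Theorem \ref{Thm 4}(D) and the embedding $\M^{\alpha_0+1+\epsilon}\hookrightarrow\Eau^{\alpha_0}$ does not reach the threshold $\alpha>\tfrac{d}{2}$. The wave estimate of Theorem \ref{Thm Poisson} supplies condition (A) for any $\alpha_0>\tfrac{d-1}{2}$, but in order to feed the Mihlin unit ball into the hypothesis of (D) you need control in $\Eau^{\alpha_0}$, and the only embedding available in the paper costs $1+\epsilon$ derivatives. The net effect is a requirement $\alpha>\alpha_0+1+\epsilon>\tfrac{d-1}{2}+1=\tfrac{d+1}{2}$, strictly worse than the classical Mihlin threshold rather than a strengthening of it. Your proposed rescue via ``compact support of $g(2^k\cdot)\dyad_0$'' does not help: that compact support is already exploited in the proof of the embedding $\M^{\alpha+1+\epsilon}\hookrightarrow\Eau^{\alpha}$ (it is precisely what makes the Mihlin and ordinary Besov norms comparable there), so it is not an additional resource. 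The loss of one full derivative is intrinsic to passing from the dyadic Fourier decomposition underlying $\Ma$ to the equidistant decomposition underlying $\Eai/\Eau$, because each dyadic block $\Fdyad_n$ covers $\sim 2^{|n|}$ equidistant blocks $\equi_l$.

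The paper does not use Theorem \ref{Thm 4} for this corollary at all. Instead it invokes an external result, \cite[Proposition 4.79]{Kr}, which takes the wave estimate \eqref{Equ Poisson Thm} together with the Paley--Littlewood decomposition as input and returns the $\gamma$-bounded $\Ma$ calculus with a loss governed by $\tfrac{1}{\text{type }X}-\tfrac{1}{\text{cotype }X}$; on $L^p(\R^d)$ this quantity is strictly below $\tfrac12$, so one passes from $\tfrac{d-1}{2}$ to any $\alpha>\tfrac{d}{2}$. In other words, the sharp threshold comes from Banach-space geometry (type/cotype) rather than from the $\Eau$ machinery, and this ingredient is absent from your proposal.
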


\begin{proof}
This follows from \cite[Proposition 4.79]{Kr} applied to the estimate
\[\gamma \left(\{ (1 + 2^k A)^{-\beta} \exp(i2^ktA) :\: k \in \Z \} \right) \lesssim \ta^{\beta} \]
with $\beta > \frac{d-1}{2}$ and $A = (-\Delta)^{\frac12}.$
Note that for the underlying $L^p$ space, one always has $\frac{1}{\text{type }X} - \frac{1}{\text{cotype }X} < \frac12,$ and 
$(-\Delta)^{\frac12}$ has a $\Ma$ calculus because $-\Delta$ has a $\Ma$ calculus, for, say, $\alpha > \frac{d}{2},$ so admits the Paley-Littlewood decomposition \eqref{Equ Paley-Littlewood}.
\end{proof}



\begin{thebibliography}{100}

\bibitem{AKMP}
P.~Auscher, C.~Kriegler, S.~Monniaux and P.~Portal.
\newblock Singular integral operators on tent spaces.
\newblock {\em J. Evol. Equ.} online first, DOI: 10.1007/s00028-012-0152-4, 2012.

\bibitem{BeL}
J.~Bergh and J.~L\"ofstr\"om.
\newblock {\em Interpolation spaces. An introduction.}
\newblock Grundlehren der mathematischen Wissenschaften, 223.
  Berlin etc.: Springer, 1976.

\bibitem{CDMY}
M.~Cowling, I.~Doust, A.~McIntosh and A.~Yagi.
\newblock Banach space operators with a bounded $H^\infty$ functional
  calculus.
\newblock {\em J. Aust. Math. Soc., Ser. A} 60(1):51--89, 1996.

\bibitem{DiJT}
J.~Diestel, H.~Jarchow and A.~Tonge.
\newblock {\em Absolutely summing operators.}
\newblock Cambridge Studies in Advanced Mathematics, 43. Cambridge: Cambridge
  Univ. Press, 1995.

\bibitem{DuOS}
X.~T. Duong, E.~M. Ouhabaz and A.~Sikora.
\newblock Plancherel-type estimates and sharp spectral multipliers.
\newblock {\em J. Funct. Anal.} 196(2):443--485, 2002.

\bibitem{Frohl}
A.~Fr{\"o}hlich.
\newblock {\em $H^\infty$-Kalk\"ul und Dilatationen}.
\newblock PhD thesis, Universit\"at Karlsruhe, 2003.

\bibitem{KaW2}
N.~Kalton and L.~Weis.
\newblock The $H^\infty$-calculus and square function estimates, preprint.

\bibitem{Kr}
C.~Kriegler.
\newblock Spectral multipliers, $R$-bounded homomorphisms, and analytic diffusion semigroups.
\newblock {PhD-thesis, online at
http://digbib.ubka.uni-karlsruhe.de/volltexte/1000015866}

\bibitem{Kr1}
C.~Kriegler.
\newblock Functional calculus and dilation for $c_0$-groups of polynomial growth.
\newblock {\em Semigroup Forum} 84(3):393--433, 2012.

\bibitem{KrW}
C.~Kriegler and L.~Weis.
\newblock Paley-Littlewood Decomposition for sectorial operators and Interpolation Spaces.
Preprint.

\bibitem{Mikh}
S.~Mikhlin.
\newblock Fourier integrals and multiple singular integrals.
\newblock {\em Vestn. Leningr. Univ.} 12(7):143--155, 1957.

\bibitem{Mu}
D.~M\"uller.
\newblock Functional calculus of Lie groups and wave propagation.
\newblock {\em Doc. Math., J. DMV Extra Vol. ICM Berlin} 679-689, 1998.

\bibitem{Per}
J.~Peral.
\newblock $L^p$ estimates for the wave equation.
\newblock{\em J. Funct. Anal.} 36:114--145, 1980.

\bibitem{St}
E.~M.~Stein.
\newblock {\em Harmonic analysis: real-variable methods, orthogonality, and oscillatory integrals.}
\newblock Princeton University Press, Princeton, NJ, 1993. xiv+695 pp.

\bibitem{Triea}
H.~Triebel.
\newblock {\em Theory of function spaces.}
\newblock Monographs in Mathematics, 78. Basel etc.: Birkh\"auser, 1983.

\bibitem{vN}
J.~van Neerven.
\newblock $\gamma$-radonifying operators: a survey.
\newblock {\em Proc. Centre Math. Appl. Austral. Nat. Univ.} 44:1--61, 2010.


\end{thebibliography}
\end{document}